\theoremstyle{plain}
\newtheorem{proposition}{Proposition}
\newtheorem{assumption}{Assumption}
\newtheorem{remark}{Remark}
\newtheorem{problem}{Problem}
\newcommand{\subscr}[2]{#1_{\textup{#2}}}
\newcommand{\subpscr}[3]{#1_{\textup{#2}}^{\textup{#3}}}
\begin{document}
%
\title{Storage Size Determination for Grid-Connected Photovoltaic Systems}
%
%
%

\author{Yu Ru, Jan Kleissl, and Sonia Martinez
\thanks{Yu Ru, Jan Kleissl, and Sonia Martinez are with the Mechanical and Aerospace Engineering Department, University of California, San Diego (e-mail: {\tt\small yuru2@ucsd.edu, jkleissl@ucsd.edu, soniamd@ucsd.edu}).}}

\maketitle

\begin{abstract}
In this paper, we study the problem of determining the size of battery storage used in grid-connected photovoltaic (PV) systems. In our setting, electricity is generated from PV and is used to supply the demand from loads. Excess electricity generated from the PV can be stored in a battery to be used later on, and electricity must be purchased from the electric grid if the PV generation and battery discharging cannot meet the demand. Due to the time-of-use electricity pricing, electricity can also be purchased from the grid when the price is low, and be sold back to the grid when the price is high. The objective is to minimize the cost associated with purchasing from (or selling back to) the electric grid and the battery capacity loss while at the same time satisfying the load and reducing the peak electricity purchase from the grid. Essentially, the objective function depends on the chosen battery size. We want to find a unique critical value (denoted as $\subpscr{C}{ref}{c}$) of the battery size such that the total cost remains the same if the battery size is larger than or equal to $\subpscr{C}{ref}{c}$, and the cost is strictly larger if the battery size is smaller than $\subpscr{C}{ref}{c}$. We obtain a criterion for evaluating the economic value of batteries compared to purchasing electricity from the grid, propose lower and upper bounds on $\subpscr{C}{ref}{c}$, and introduce an  efficient algorithm for calculating its value; these results are validated via simulations.
\end{abstract}
%
%
%
%
%
%
%
%
%
\section{Introduction} \label{section1}

The need to reduce greenhouse gas emissions due to fossil fuels and the liberalization of the electricity market have led to large scale development of renewable energy generators in electric grids~\cite{yru_journal:Kanchev_2011}. Among renewable energy technologies such as hydroelectric, photovoltaic (PV), wind, geothermal, biomass, and tidal systems, grid-connected solar PV continued to be the fastest growing power generation technology, with a $70$\% increase in existing capacity to $13$GW in 2008~\cite{yru_journal:Teleke_2010}. However, solar energy generation tends to be variable due to the diurnal cycle of the solar geometry and clouds. Storage devices (such as batteries, ultracapacitors, compressed air, and pumped hydro storage~\cite{yru_journal:Lafoz_2008}) can be used to i) smooth out the fluctuation of the PV output fed into electric grids (``capacity firming")~\cite{yru_journal:Teleke_2010, yru_journal:Omran_2011}, ii) discharge and augment the PV output during times of peak energy usage (``peak shaving")~\cite{yru_journal:Riffonneau_2011}, or iii) store energy for nighttime use, for example in zero-energy buildings and residential homes.

Depending on the specific application (whether it is off-grid or grid-connected), battery storage size is determined based on the battery specifications such as the battery storage capacity (and the minimum battery charging/discharging time). For off-grid applications, batteries have to fulfill the following requirements: (i) the discharging rate has to be larger than or equal to the peak load capacity; (ii) the battery storage capacity has to be large enough to supply the largest night time energy use and to be able to supply energy during the longest cloudy period (autonomy). The IEEE standard~\cite{yru_journal:IEEE_1013_2007} provides sizing recommendations for lead-acid batteries in stand-alone PV systems. In~\cite{yru_journal:Shrestha_1998}, the solar panel size and the battery size have been selected via simulations to optimize the operation of a stand-alone PV system, which considers reliability measures in terms of loss of load hours, the energy loss and the total cost. In contrast, if the PV system is grid-connected, autonomy is a secondary goal; instead, batteries can reduce the fluctuation of PV output or provide economic benefits such as demand charge reduction, and arbitrage. The work in~\cite{yru_journal:Akatsuka_2010} analyzes the relation between available battery capacity and output smoothing, and estimates the required battery capacity using simulations. In addition, the battery sizing problem has been studied for wind power applications~\cite{yru_journal:Wang_2008, yru_journal:Brekken_2011, yru_journal:Li_2011} and hybrid wind/solar power applications~\cite{yru_journal:Borowy_1996, yru_journal:Chedid_1997, yru_journal:Vrettos_2011}. In~\cite{yru_journal:Wang_2008}, design of a battery energy storage system is examined for the purpose of attenuating the effects of unsteady input power from wind farms, and solution to the problem via a computational procedure results in the determination of the battery energy storage system's capacity. Similarly, in~\cite{yru_journal:Li_2011}, based on the statistics of long-term wind speed data captured at the farm, a dispatch strategy is proposed which allows the battery capacity to be determined so as to maximize a defined service lifetime/unit cost index of the energy storage system; then a numerical approach is used due to the lack of an explicit mathematical expression to describe the lifetime as a function of the battery capacity. In~\cite{yru_journal:Brekken_2011}, sizing and control methodologies for a zinc-bromine flow battery-based energy storage system are proposed to minimize the cost of the energy storage system. However, the sizing of the battery is significantly impacted by specific control strategies.
In~\cite{yru_journal:Borowy_1996}, a methodology for calculating the
optimum size of a battery bank and the PV array for a stand-alone
hybrid wind/PV system is developed, and a simulation model is used to examine different combinations of the number of PV modules and the number
of batteries. In~\cite{yru_journal:Chedid_1997}, an approach is proposed to help designers determine the optimal design of a hybrid wind-solar
power system; the proposed analysis employs linear programming techniques to minimize the average production cost of electricity while meeting the load requirements in a reliable manner. In~\cite{yru_journal:Vrettos_2011}, genetic algorithms are used to optimally size the hybrid system components, i.e., to select the optimal wind turbine and PV rated power, battery energy storage system nominal capacity and inverter rating. The primary optimization objective is the minimization of the levelized energy cost of the island system over the entire lifetime of the project.

In this paper, we study the problem of determining the battery size for grid-connected PV systems. The targeted applications are primarily electricity customers with PV arrays ``behind the meter," such as residential and commercial buildings with rooftop PVs. In such applications, the objective is to reduce the energy cost and the loss of investment on storage devices due to aging effects while satisfying the loads and reducing peak electricity purchase from the grid (instead of smoothing out the fluctuation of the PV output fed into electric grids). Our setting\footnote{Note that solar panels and batteries both operate on DC, while the grid and loads operate on AC. Therefore, DC-to-AC and AC-to-DC power conversion is necessary when connecting solar panels and batteries with the grid and loads.} is shown in Fig.~\ref{fig1}. Electricity is generated from PV panels, and is used to supply different types of loads. Battery storage is used to either store excess electricity generated from PV systems for later use when PV generation is insufficient to serve the load, or purchase electricity from the grid when the time-of-use pricing is lower and sell back to the grid when the time-of-use pricing is higher. Without a battery, if the load was too large to be supplied by PV generated electricity, electricity would have to be purchased from the grid to meet the demand. Naturally, given the high cost of battery storage, the size of the battery storage should be chosen such that the cost of electricity purchase from the grid and the loss of investment on batteries are minimized. Intuitively, if the battery is too large, the electricity purchase cost could be the same as the case with a relatively smaller battery. In this paper, we show that there is a unique critical value (denoted as $\subpscr{C}{ref}{c}$, refer to Problem~\ref{problem:battery_size}) of the battery capacity such that the cost of electricity purchase and the loss of investments on batteries remains the same if the battery size is larger than or equal to $\subpscr{C}{ref}{c}$ and the cost is strictly larger if the battery size is smaller than $\subpscr{C}{ref}{c}$. We obtain a criterion for evaluating the economic value of batteries compared to purchasing electricity from the grid, propose lower and upper bounds on $\subpscr{C}{ref}{c}$ given the PV generation, loads, and the time period for minimizing the costs, and introduce an efficient algorithm for calculating the critical battery capacity based on the bounds; these results are validated via simulations.

The contributions of this work are the following: i) to the best of our knowledge, this is the first attempt on determining the battery size for grid-connected PV systems based on a theoretical analysis on the lower and upper bounds of the battery size; in contrast, most previous work are based on simulations, e.g., the work in~\cite{yru_journal:Borowy_1996, yru_journal:Wang_2008, yru_journal:Akatsuka_2010, yru_journal:Li_2011, yru_journal:Brekken_2011}; ii) a criterion for evaluating the economic value of batteries compared to purchasing electricity from the grid is derived (refer to Proposition~\ref{prop:K} and Assumption~\ref{assumption_price}), which can be easily calculated and could be potentially used for choosing appropriate battery technologies for practical applications; and iii) lower and upper bounds on the battery size are proposed, and an efficient algorithm is introduced to calculate its value for the given PV generation and dynamic loads; these results are then validated using simulations. Simulation results illustrate the benefits of employing batteries in grid-connected PV systems via peak shaving and cost reductions compared with the case without batteries (this is discussed in Section~\ref{section5}.B).

The paper is organized as follows. In the next section, we lay out our setting, and formulate the storage size determination problem. Lower and upper bounds on $\subpscr{C}{ref}{c}$ are proposed in Section~\ref{section3}. Algorithms are introduced in Section~\ref{section4} to calculate the value of the critical battery capacity. In Section~\ref{section5}, we validate the results via simulations. Finally, conclusions and future directions are given in Section~\ref{section6}.

\section{Problem Formulation} \label{section2}
In this section, we formulate the problem of determining the storage size for a grid-connected PV system, as shown in Fig.~\ref{fig1}. We first introduce different components in our setting.

\begin{figure}[b] \centering
\includegraphics[scale=0.6]{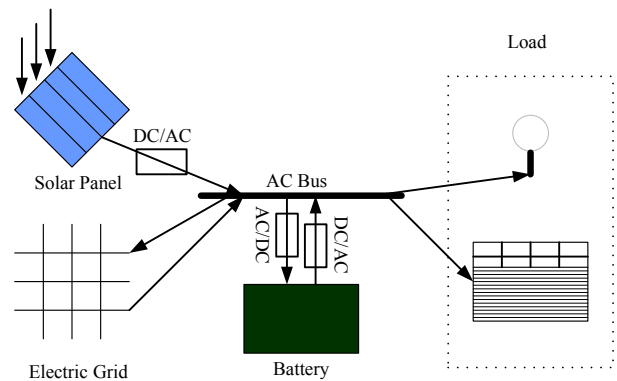}
\caption{Grid-connected PV system with battery storage and loads.}
\label{fig1}
\end{figure}

\subsection{Photovoltaic Generation}

We use the following equation to calculate the electricity generated from solar panels:
\begin{equation}
\subscr{P}{pv}(t) =   \textrm{GHI}(t) \times S \times \eta~, \label{eq:PV}
\end{equation}
where
\begin{itemize}
\item $\textrm{GHI}$ ($Wm^{-2}$) is the global horizontal irradiation at the location of solar panels,
\item $S$~($m^2$) is the total area of solar panels, and
\item $\eta$ is the solar conversion efficiency of the PV cells.
\end{itemize}
The PV generation model is a simplified version of the one used in~\cite{yru_journal:Rahman_2007} and does not account for PV panel temperature effects.\footnote{Note that our analysis on determining battery capacity only relies on $\subscr{P}{pv}(t)$ instead of detailed models of the PV generation. Therefore, more complicated PV generation models can also be incorporated into the cost minimization problem as discussed in Section~\ref{section2}.F.}

\subsection{Electric Grid}
Electricity can be purchased from or sold back to the grid. For simplicity, we assume that the prices for sales and purchases at time $t$ are identical and are denoted as $\subscr{C}{g}(t) (\$/Wh)$. Time-of-use pricing is used ($\subscr{C}{g}(t) \geq 0$ depends on $t$) because commercial buildings with PV systems that would consider a battery system usually pay time-of-use electricity rates. In addition, with increased deployment of smart meters and electric vehicles, some utility companies are moving towards residential time-of-use pricing as well; for example, SDG\&E (San Diego Gas \& Electric) has the peak, semipeak, offpeak prices for a day in the summer season~\cite{yru_journal:DynamicPricing}, as shown in Fig.~\ref{fig12}.

We use $\subscr{P}{g}(t) (W)$ to denote the electric power exchanged with the grid with the interpretation that
\begin{itemize}
\item $\subscr{P}{g}(t) > 0$ if electric power is purchased from the grid,
\item $\subscr{P}{g}(t) < 0$ if electric power is sold back to the grid.
\end{itemize}
In this way, positive costs are associated with the electricity purchase from the grid, and negative costs with the electricity sold back to the grid. In this paper, peak shaving is enforced through the constraint that $$\subscr{P}{g}(t) \leq D~,$$ where $D$ is a positive constant.

\subsection{Battery}
A battery has the following dynamic:
\begin{equation}
\frac{d E_B (t)}{d t} = P_B(t)~, \label{eq:battery_dynamic}
\end{equation}
where $E_B(t) (Wh)$ is the amount of electricity stored in the battery at time $t$, and $P_B(t) (W)$ is the charging/discharging rate; more specifically,
\begin{itemize}
\item $P_B(t) > 0$ if the battery is charging,
\item and $P_B(t) < 0$ if the battery is discharging.
\end{itemize}
For certain types of batteries, higher order models exist (e.g., a third order model is proposed in~\cite{yru_journal:Ceraolo_2000, yru_journal:Barsali_2002}).

To take into account battery aging, we use $C(t) (Wh)$ to denote the usable battery capacity at time $t$. At the initial time $t_0$, the usable battery capacity is $\subscr{C}{ref}$, i.e., $C(t_0) = \subscr{C}{ref} \geq 0$. The cumulative capacity loss at time $t$ is denoted as $\Delta C(t) (Wh)$, and $\Delta C(t_0) = 0$. Therefore, $$C(t) = \subscr{C}{ref} - \Delta C(t)~.$$ The battery aging satisfies the following dynamic equation
\begin{equation}
\frac{d \Delta C(t)}{d t} = \begin{cases} - Z P_B(t) & \mathrm{if~} P_B(t) < 0\\
0 & \mathrm{otherwise}~,
\end{cases} \label{eq:battery_aging}
\end{equation}
where $Z > 0$ is a constant depending on battery technologies. This aging model is derived from the aging model in~\cite{yru_journal:Riffonneau_2011} under certain reasonable assumptions; the detailed derivation is provided in the Appendix. Note that there is a capacity loss only when electricity is discharged from the battery. Therefore, $\Delta C(t)$ is a nonnegative and non-decreasing function of $t$.

We consider the following constraints on the battery:
\begin{itemize}
\item[i)] At any time, the battery charge $E_B(t)$ should satisfy $$0 \leq E_B(t) \leq C(t) = \subscr{C}{ref} - \Delta C(t)~,$$
\item[ii)] The battery charging/discharging rate should satisfy $$\subscr{P}{Bmin} \leq P_B(t) \leq \subscr{P}{Bmax}~,$$ where $\subscr{P}{Bmin}<0$, $-\subscr{P}{Bmin}$ is the maximum battery discharging rate, and $\subscr{P}{Bmax}>0$ is the maximum battery charging rate. For simplicity, we assume that $$\subscr{P}{Bmax} = -\subscr{P}{Bmin} = \frac{C(t)}{T_c} = \frac{\subscr{C}{ref} - \Delta C(t)}{T_c}~,$$ where constant $T_c > 0$ is the minimum time required to charge the battery from $0$ to $C(t)$ or discharge the battery from $C(t)$ to $0$.
\end{itemize}

\subsection{Load}
$\subscr{P}{load}(t) (W)$ denotes the load at time $t$. We do not make explicit assumptions on the load considered in Section~\ref{section3} except that $\subscr{P}{load}(t)$ is a (piecewise) continuous function. In residential home settings, loads could have a fixed schedule such as lights and TVs, or a relatively flexible schedule such as refrigerators and air conditioners. For example, air conditioners can be turned on and off with different schedules as long as the room temperature is within a comfortable range.

\subsection{Converters for PV and Battery}
Note that the PV, battery, grid, and loads are all connected to an AC bus. Since PV generation is operated on DC, a DC-to-AC converter is necessary, and its efficiency is assumed to be a constant $\subscr{\eta}{pv}$ satisfying $$0 < \subscr{\eta}{pv} \leq 1~.$$

Since the battery is also operated on DC, an AC-to-DC converter is necessary when charging the battery, and a DC-to-AC converter is necessary when discharging as shown in Fig.~\ref{fig1}. For simplicity, we assume that both converters have the same constant conversion efficiency $\subscr{\eta}{B}$ satisfying $$0 < \subscr{\eta}{B} \leq 1~.$$ We define
\begin{equation}
\subscr{P}{BC}(t) = \begin{cases} \subscr{\eta}{B} P_B(t) & \mathrm{if~} P_B(t) < 0\\
\frac{P_B(t)}{\subscr{\eta}{B}} & \mathrm{otherwise}~,
\end{cases} \nonumber
\end{equation}
In other words, $\subscr{P}{BC}(t)$ is the power exchanged with the AC bus when the converters and the battery are treated as an entity. Similarly, we can derive
\begin{equation}
P_B(t) = \begin{cases} \frac{\subscr{P}{BC}(t)}{\subscr{\eta}{B}} & \mathrm{if~} \subscr{P}{BC}(t) < 0\\
\subscr{\eta}{B} \subscr{P}{BC}(t) & \mathrm{otherwise}~,
\end{cases} \nonumber
\end{equation}
Note that $\eta_B^2$ is the round trip efficiency of the battery storage.

\subsection{Cost Minimization} \label{section_problem}
With all the components introduced earlier, now we can formulate the
following problem of minimizing the sum of the net power purchase cost\footnote{Note that the net power purchase cost include the positive cost to purchase electricity from the grid, and the negative cost to sell electricity back to the grid.} and the cost associated with the battery capacity loss while guaranteeing that the demand from loads and the peak shaving requirement are satisfied:
\begin{align}
\min_{P_B, P_g} &\int_{t_0}^{t_0 + T} \subscr{C}{g}(\tau) \subscr{P}{g}(\tau) d \tau + K \Delta C(t_0 + T) \nonumber\\
\textrm{s.t.}~& \subscr{\eta}{pv} \subscr{P}{pv}(t) + \subscr{P}{g}(t) = \subscr{P}{BC}(t) + \subscr{P}{load}(t)~,\label{eq:power_balance}\\
& \frac{d E_B (t)}{d t} = P_B(t)~,\nonumber\\
& \frac{d \Delta C(t)}{d t} = \begin{cases} - Z P_B(t) & \mathrm{if~} P_B(t) < 0\\
0 & \mathrm{otherwise}~,
\end{cases} \nonumber\\
& 0 \leq E_B(t) \leq \subscr{C}{ref} - \Delta C(t)~,\nonumber\\
& E_B(t_0) = 0~, \Delta C(t_0) = 0~,\nonumber\\
& \subscr{P}{Bmin} \leq P_B(t) \leq \subscr{P}{Bmax}~,\nonumber\\
& \subscr{P}{Bmax} = -\subscr{P}{Bmin} = \frac{\subscr{C}{ref} - \Delta C(t)}{T_c}, \nonumber\\
& P_B(t) = \begin{cases} \frac{\subscr{P}{BC}(t)}{\subscr{\eta}{B}} & \mathrm{if~} \subscr{P}{BC}(t) < 0\\
\subscr{\eta}{B} \subscr{P}{BC}(t) & \mathrm{otherwise}~,
\end{cases} \nonumber\\
& \subscr{P}{g}(t) \leq D~,\label{optimization_onestep}
\end{align}
where $t_0$ is the initial time, $T$ is the time period considered for the cost minimization, $K (\$/Wh) > 0$ is the unit cost for the battery capacity loss. Note that:
\begin{itemize}
\item[i)] No cost is associated with PV generation. In other words, PV generated electricity is assumed free;
\item[ii)] $K \Delta C(t_0 + T)$ is the loss of the battery purchase investment during the time period from $t_0$ to $t_0 + T$ due to the use of the battery to reduce the net power purchase cost;
\item[iii)] Eq.~\eqref{eq:power_balance} is the power balance
requirement for any time $t \in [t_0, t_0 + T]$;
\item[iv)] the constraint $P_g(t) \leq D$ captures the peak shaving requirement.
\end{itemize}

Given a battery of initial capacity $\subscr{C}{ref}$, on the one hand, if the battery is rarely used, then the cost due to the capacity loss $K \Delta C(t_0 + T)$ is low while the net power purchase cost $\int_{t_0}^{t_0 + T} \subscr{C}{g}(\tau) \subscr{P}{g}(\tau) d \tau$ is high; on the other hand, if the battery is used very often, then the net power purchase cost $\int_{t_0}^{t_0 + T} \subscr{C}{g}(\tau) \subscr{P}{g}(\tau) d \tau$ is low while the cost due to the capacity loss $K \Delta C(t_0 + T)$ is high. Therefore, there is a tradeoff on the use of the battery, which is characterized by calculating an optimal control policy on $P_B(t), P_g(t)$ to the optimization problem in Eq.~\eqref{optimization_onestep}.

\begin{remark}
Besides the constraint $P_g(t) \leq D$, peak shaving is also accomplished indirectly through dynamic pricing. Time-of-use price margins and schedules are motivated by the peak load magnitude and timing. Minimizing the net power purchase cost results in battery discharge and reduction in grid purchase during peak times. If, however, the peak load for a customer falls into the off-peak time period, then the constraint on $P_g(t)$ limits the amount of electricity that can be purchased. Peak shaving capabilities of the constraint $P_g(t) \leq D$ and dynamic pricing will be illustrated in Section~\ref{section5}.B. \label{remark:D} \hfill $\blacksquare$
\end{remark}

\subsection{Storage Size Determination}
Based on Eq.~\eqref{eq:power_balance}, we obtain $$\subscr{P}{g}(t) = \subscr{P}{load}(t) - \subscr{\eta}{pv} \subscr{P}{pv}(t) + \subscr{P}{BC}(t)~.$$ Let $u(t) = \subscr{P}{BC}(t)$, then the optimization problem in Eq.~\eqref{optimization_onestep} can be rewritten as
\begin{align}
\min_{u} &\resizebox{.9\hsize}{!}{$\int_{t_0}^{t_0 + T} \subscr{C}{g}(\tau) (\subscr{P}{load}(\tau) - \subscr{\eta}{pv} \subscr{P}{pv}(\tau) + u(\tau)) d \tau + K \Delta C(t_0 + T)$} \nonumber\\
\textrm{s.t.}~& \frac{d E_B (t)}{d t} = \begin{cases} \frac{u(t)}{\subscr{\eta}{B}} & \mathrm{if~} u(t) < 0\\
\subscr{\eta}{B} u(t) & \mathrm{otherwise}~,
\end{cases} \nonumber\\
& \frac{d \Delta C(t)}{d t} = \begin{cases} - Z \frac{u(t)}{\subscr{\eta}{B}} & \mathrm{if~} u(t) < 0\\
0 & \mathrm{otherwise}~,
\end{cases} \nonumber\\
& E_B(t) \geq 0~, E_B(t_0) = 0~, \Delta C(t_0) = 0~,\nonumber\\
& E_B(t) + \Delta C(t) \leq \subscr{C}{ref}~, \nonumber\\
& \subscr{\eta}{B} u(t) T_c + \Delta C(t) \leq \subscr{C}{ref}~\mathrm{if~} u(t) > 0, \nonumber\\
& - \frac{u(t)}{\subscr{\eta}{B}} T_c + \Delta C(t) \leq \subscr{C}{ref}~\mathrm{if~} u(t) < 0, \nonumber\\
& \subscr{P}{load}(t) - \subscr{\eta}{pv} \subscr{P}{pv}(t) + u(t) \leq D~.\label{optimization_onestep_simplified}
\end{align}
Now it is clear that only $u(t)$ is an independent variable. We define the set of feasible controls as controls that guarantee all the constraints in the optimization problem in Eq.~\eqref{optimization_onestep_simplified}.

Let $J$ denote the objective function $$\resizebox{.95\hsize}{!}{$\min_{u} \int_{t_0}^{t_0 + T} \subscr{C}{g}(\tau) (\subscr{P}{load}(\tau) - \subscr{\eta}{pv}\subscr{P}{pv}(\tau) + u(\tau)) d \tau + K \Delta C(t_0 + T)$}~.$$ If we \emph{fix} the parameters $t_0, T, K, Z, T_c$, and $D$, $J$ is a function of $\subscr{C}{ref}$, which is denoted as $J(\subscr{C}{ref})$. If we increase $\subscr{C}{ref}$, intuitively $J$ will decrease though may not strictly decrease (this is formally proved in Proposition~\ref{prop:monotonic}) because the battery can be utilized to decrease the cost by
\begin{itemize}
\item[i)] storing extra electricity generated from PV or purchasing electricity from the grid when the time-of-use pricing is low, and
\item[ii)] supplying the load or selling back when the time-of-use pricing is high.
\end{itemize}
Now we formulate the following storage size determination problem.

\begin{problem} (\textbf{Storage Size Determination}) Given the optimization problem in Eq.~\eqref{optimization_onestep_simplified} with fixed $t_0, T, K, Z, T_c$, and $D$, determine a critical value $\subpscr{C}{ref}{c} \geq 0$ such that
\begin{itemize}
\item $\forall \subscr{C}{ref} < \subpscr{C}{ref}{c}$, $J(\subscr{C}{ref}) > J(\subpscr{C}{ref}{c})$, and
\item $\forall \subscr{C}{ref} \geq \subpscr{C}{ref}{c}$, $J(\subscr{C}{ref}) = J(\subpscr{C}{ref}{c})$.
\end{itemize} \label{problem:battery_size}
\end{problem}

One approach to calculate the critical value $\subpscr{C}{ref}{c}$ is that we first obtain an explicit expression for the function $J(\subscr{C}{ref})$ by solving the optimization problem in Eq.~\eqref{optimization_onestep_simplified} and then solve for $\subpscr{C}{ref}{c}$ based on the function $J$. However, the optimization problem in Eq.~\eqref{optimization_onestep_simplified} is difficult to solve due to the nonlinear constraints on $u(t)$ and $E_B(t)$, and the fact that it is hard to obtain analytical expressions for $\subscr{P}{load}(t)$ and $\subscr{P}{pv}(t)$ in reality. Even though it might be possible to find the optimal control using the minimum principle~\cite{yru_journal:Bryson_1975}, it is still hard to get an explicit expression for the cost function $J$. Instead, in the next section, we identify conditions under which the storage size determination problem results in non-trivial solutions (namely, $\subpscr{C}{ref}{c}$ is positive and finite), and then propose lower and upper bounds on the critical battery capacity $\subpscr{C}{ref}{c}$.

\section{Bounds on $\subpscr{C}{ref}{c}$} \label{section3}

Now we examine the cost minimization problem in Eq.~\eqref{optimization_onestep_simplified}. Since $\subscr{P}{load}(t) - \subscr{\eta}{pv}\subscr{P}{pv}(t) + u(t) \leq D$, or equivalently, $u(t) \leq D + \subscr{\eta}{pv}\subscr{P}{pv}(t) - \subscr{P}{load}(t)$, is a constraint that has to be satisfied for any $t \in [t_0, t_0 + T]$, there are scenarios in which either there is no feasible control or $u(t) = 0$ for $t \in [t_0, t_0 + T]$. Given $\subscr{P}{pv}(t), \subscr{P}{load}(t)$, and $D$, we define
\begin{align}
&\resizebox{.85\hsize}{!}{$S_1 = \{t \in [t_0, t_0 + T]~|~D + \subscr{\eta}{pv}\subscr{P}{pv}(t) - \subscr{P}{load}(t) < 0\}$}~, \label{eq:S1}\\
&\resizebox{.85\hsize}{!}{$S_2 = \{t \in [t_0, t_0 + T]~|~D + \subscr{\eta}{pv}\subscr{P}{pv}(t) - \subscr{P}{load}(t) = 0\}$}~, \label{eq:S2}\\
&\resizebox{.85\hsize}{!}{$S_3 = \{t \in [t_0, t_0 + T]~|~D + \subscr{\eta}{pv}\subscr{P}{pv}(t) - \subscr{P}{load}(t) > 0\}$}~. \label{eq:S3}
\end{align}
Note that\footnote{$C = A \oplus B$ means $C = A \cup B$ and $A \cap B = \emptyset$.} $S_1 \oplus S_2 \oplus S_3 = [t_0, t_0 + T]$. Intuitively, $S_1$ is the set of time instants at which the battery can only be discharged, $S_2$ is the set of time instants at which the battery can be discharged or is not used (i.e., $u(t) = 0$), and $S_3$ is the set of time instants at which the battery can be charged, discharged, or is not used.

\begin{proposition}
Given the optimization problem in Eq.~\eqref{optimization_onestep_simplified}, if
\begin{itemize}
\item[\textbf{i)}] $t_0 \in S_1$, or
\item[\textbf{ii)}] $S_3$ is empty, or
\item[\textbf{iii)}] $t_0 \notin S_1$ (or equivalently, $t_0 \in S_2 \cup S_3$), $S_3$ is nonempty, $S_1$ is nonempty, and $\exists t_1 \in S_1$, $\forall t_3 \in S_3$, $t_1 < t_3$,
\end{itemize}
then either there is no feasible control or $u(t) = 0$ for $t \in [t_0, t_0 + T]$. \label{prop:existence}
\end{proposition}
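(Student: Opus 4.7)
The plan is to treat the three cases separately, in each exploiting the initial condition $E_B(t_0)=0$, the state constraint $E_B(t)\geq 0$, and the sign restrictions on $u(t)$ imposed through the pointwise peak-shaving constraint $u(t)\leq D+\subscr{\eta}{pv}\subscr{P}{pv}(t)-\subscr{P}{load}(t)$. The single unifying observation I would establish first is that \emph{the battery cannot discharge from an empty state}: since the dynamics give $\dot E_B=u/\subscr{\eta}{B}$ for $u<0$ and $\dot E_B=\subscr{\eta}{B}u$ for $u\geq 0$, and $\subscr{\eta}{B}>0$, the sign of $\dot E_B$ matches the sign of $u$. So at any instant with $E_B(t)=0$ the state constraint $E_B\geq 0$ forces $\dot E_B(t)\geq 0$, hence $u(t)\geq 0$. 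In parallel, by the definitions in Eqs.~\eqref{eq:S1}--\eqref{eq:S3}, membership $t\in S_1$ forces $u(t)<0$ strictly, while $t\in S_2$ forces $u(t)\leq 0$.

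Case \textbf{i)} is then immediate: at $t=t_0\in S_1$ feasibility demands $u(t_0)<0$, while the initial condition $E_B(t_0)=0$ demands $u(t_0)\geq 0$, a contradiction, so no feasible control exists.

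For case \textbf{ii)}, $S_3=\emptyset$ means $u(t)\leq 0$ for every $t\in[t_0,t_0+T]$. By the dynamics this gives $\dot E_B(t)\leq 0$ throughout, and combined with $E_B(t_0)=0$ and $E_B\geq 0$ pins $E_B(t)\equiv 0$. The "no discharge when empty" observation then forces $u(t)\equiv 0$. This candidate control is feasible iff $0\leq D+\subscr{\eta}{pv}\subscr{P}{pv}(t)-\subscr{P}{load}(t)$ holds for every $t$, i.e., iff $S_1=\emptyset$; hence either no feasible control exists, or $u(t)\equiv 0$ is the unique feasible control, matching the conclusion.

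Case \textbf{iii)} reduces to case ii) applied on a sub-interval. The hypothesis $\exists\,t_1\in S_1$ with $t_1<t_3$ for every $t_3\in S_3$ gives $[t_0,t_1]\cap S_3=\emptyset$, so $[t_0,t_1]\subseteq S_1\cup S_2$ and $u(t)\leq 0$ on $[t_0,t_1]$. Repeating the argument of case ii) on this sub-interval forces $E_B(t)\equiv 0$ on $[t_0,t_1]$, in particular $E_B(t_1)=0$; but $t_1\in S_1$ again demands $u(t_1)<0$, which contradicts "no discharge when empty", so no feasible control exists. The main obstacle I anticipate is simply bookkeeping around the piecewise-defined dynamics when relating the signs of $u$ and $\dot E_B$; once that unifying lemma is in place, the three cases collapse into the same two-line combinatorial argument about the sign of $u$ at an instant where the battery is empty.
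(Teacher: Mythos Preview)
Your proposal is correct and follows essentially the same case-by-case approach as the paper, exploiting $E_B(t_0)=0$, the constraint $E_B\geq 0$, and the sign restrictions on $u$ imposed by membership in $S_1,S_2,S_3$. Your version is in fact slightly more organized: you isolate the ``no discharge when empty'' observation as a reusable lemma, and your treatment of case~\textbf{iii)} (restricting to the sub-interval $[t_0,t_1]$ and rerunning the case~\textbf{ii)} argument there) is cleaner than the paper's more informal statement that ``the battery is always discharged before possibly being charged.''
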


\begin{proof}
Now we prove that, under these three cases, either there is no feasible control or $u(t) = 0$ for $t \in [t_0, t_0 + T]$.
\begin{itemize}
\item[\textbf{i)}] If $t_0 \in S_1$, then $u(t_0) \leq D + \subscr{\eta}{pv}\subscr{P}{pv}(t_0) - \subscr{P}{load}(t_0) < 0$. However, since $E_B(t_0) = 0$, the battery cannot be discharged at time $t_0$. Therefore, there is no feasible control.

\item[\textbf{ii)}] If $S_3$ is empty, it means that $u(t_0) \leq D + \subscr{\eta}{pv}\subscr{P}{pv}(t_0) - \subscr{P}{load}(t_0) \leq 0$ for any $t \in [t_0, t_0 + T]$, which implies that the battery can never be charged. If $S_1$ is nonempty, then there exists some time instant when the battery has to be discharged. Since $E_B(t_0) = 0$ and the battery can never be charged, there is no feasible control. If $S_1$ is empty, then $u(t) = 0$ for any $t \in [t_0, t_0 + T]$ is the only feasible control because $E_B(t_0) = 0$.

\item[\textbf{iii)}] In this case, the battery has to be discharged at time $t_1$, but the charging can only happen at time instant $t_3 \in S_3$. If $\forall t_3 \in S_3$, $\exists t_1 \in S_1$ such that $t_1 < t_3$, then the battery is always discharged before possibly being charged. Since $E_B(t_0) = 0$, then there is no feasible control.
\end{itemize}
\end{proof}

Note that if the only feasible control is $u(t) = 0$ for $t \in [t_0, t_0 + T]$, then the battery is not used. Therefore, we impose the following assumption.

\begin{assumption}
In the optimization problem in Eq.~\eqref{optimization_onestep_simplified}, $t_0 \in S_2 \cup S_3$, $S_3$ is nonempty, and either
\begin{itemize}
\item $S_1$ is empty, or
\item $S_1$ is nonempty, but $\forall t_1 \in S_1$, $\exists t_3 \in S_3$, $t_3 < t_1$,
\end{itemize}
where $S_1, S_2, S_3$ are defined in Eqs.~\eqref{eq:S1},~\eqref{eq:S2},~\eqref{eq:S3}. \label{assumption}
\end{assumption}

Given Assumption~\ref{assumption}, there exists at least one feasible control. Now we examine how $J(\subscr{C}{ref})$ changes when $\subscr{C}{ref}$ increases.

\begin{proposition}
Consider the optimization problem in Eq.~\eqref{optimization_onestep_simplified} with fixed $t_0, T, K, Z, T_c$, and $D$. If $\subscr{C}{ref}^1 < \subscr{C}{ref}^2$, then $J(\subscr{C}{ref}^1) \geq J(\subscr{C}{ref}^2)$. \label{prop:monotonic}
\end{proposition}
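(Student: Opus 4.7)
The proof plan is a standard monotonicity-by-feasible-set-inclusion argument: increasing $\subscr{C}{ref}$ only relaxes constraints, while the objective functional depends on the control $u$ alone (not directly on $\subscr{C}{ref}$), so the minimum can only decrease.

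First I would fix $t_0, T, K, Z, T_c, D$ together with the exogenous signals $\subscr{P}{pv}(\cdot), \subscr{P}{load}(\cdot), \subscr{C}{g}(\cdot)$. For any admissible $u(\cdot)$, observe that the trajectories $E_B(\cdot)$ and $\Delta C(\cdot)$ are determined solely by $u$ via the dynamics in~\eqref{optimization_onestep_simplified} together with the initial conditions $E_B(t_0)=0, \Delta C(t_0)=0$; in particular, neither trajectory depends on $\subscr{C}{ref}$. Likewise, the objective
\[
\int_{t_0}^{t_0+T} \subscr{C}{g}(\tau)\bigl(\subscr{P}{load}(\tau)-\subscr{\eta}{pv}\subscr{P}{pv}(\tau)+u(\tau)\bigr)\,d\tau + K\,\Delta C(t_0+T)
\]
depends only on $u$. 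Thus, writing $\mathcal{U}(\subscr{C}{ref})$ for the feasible set of controls under capacity $\subscr{C}{ref}$, the cost of any $u$ is the same number regardless of which $\subscr{C}{ref}$ we pair it with.

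Next I would show $\mathcal{U}(\subscr{C}{ref}^1) \subseteq \mathcal{U}(\subscr{C}{ref}^2)$ whenever $\subscr{C}{ref}^1 < \subscr{C}{ref}^2$. Inspecting the constraints in~\eqref{optimization_onestep_simplified}, the only ones involving $\subscr{C}{ref}$ are the three upper-bound inequalities
\[
E_B(t)+\Delta C(t)\leq \subscr{C}{ref},\quad \subscr{\eta}{B}u(t)T_c+\Delta C(t)\leq \subscr{C}{ref},\quad -\tfrac{u(t)}{\subscr{\eta}{B}}T_c+\Delta C(t)\leq \subscr{C}{ref},
\]
and the remaining constraints (dynamics, non-negativity of $E_B$, initial conditions, peak-shaving $\subscr{P}{load}-\subscr{\eta}{pv}\subscr{P}{pv}+u\leq D$) are independent of $\subscr{C}{ref}$. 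Since the left-hand sides of the three inequalities are functions of $u$ only (again through $E_B,\Delta C$), any $u\in\mathcal{U}(\subscr{C}{ref}^1)$ satisfies them with the larger right-hand side $\subscr{C}{ref}^2$, and therefore lies in $\mathcal{U}(\subscr{C}{ref}^2)$.

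Finally, taking the infimum of an identical objective over a larger feasible set cannot increase it: $J(\subscr{C}{ref}^2)=\inf_{u\in\mathcal{U}(\subscr{C}{ref}^2)}(\cdot)\leq \inf_{u\in\mathcal{U}(\subscr{C}{ref}^1)}(\cdot)=J(\subscr{C}{ref}^1)$, which gives the claim. There is essentially no obstacle here; the only point requiring a moment of care is articulating that the $E_B,\Delta C$ appearing in the $\subscr{C}{ref}$-dependent constraints are functionals of $u$ alone, so the nested feasible-set inclusion truly goes through without needing to modify the control when enlarging $\subscr{C}{ref}$.
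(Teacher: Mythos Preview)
Your proposal is correct and takes essentially the same approach as the paper: both arguments show that any control feasible for $\subscr{C}{ref}^1$ remains feasible for $\subscr{C}{ref}^2$ because $\subscr{C}{ref}$ appears only as the right-hand side of upper-bound constraints, and the cost functional itself does not depend on $\subscr{C}{ref}$. The paper phrases this via a specific minimizer $u^1$, whereas you state it as feasible-set inclusion $\mathcal{U}(\subscr{C}{ref}^1)\subseteq\mathcal{U}(\subscr{C}{ref}^2)$; these are the same idea.
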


\begin{proof}
Given $\subscr{C}{ref}^1$, suppose control $u^1(t)$ achieves the minimum cost $J(\subscr{C}{ref}^1)$ and the corresponding states for the battery charge and capacity loss are $E_B^1(t)$ and $\Delta C^1(t)$. Since
\begin{align*}
& E_B^1(t) + \Delta C^1(t) \leq \subscr{C}{ref}^1 < \subscr{C}{ref}^2~, \nonumber\\
& \subscr{\eta}{B}u^1(t) T_c + \Delta C^1(t) \leq \subscr{C}{ref}^1 < \subscr{C}{ref}^2~\mathrm{if} ~u^1(t) > 0, \nonumber\\
& - \frac{u^1(t)}{\subscr{\eta}{B}} T_c + \Delta C^1(t) \leq \subscr{C}{ref}^1 < \subscr{C}{ref}^2~\mathrm{if} ~u^1(t) < 0, \nonumber\\
& \subscr{P}{load}(t) - \subscr{\eta}{pv}\subscr{P}{pv}(t) + u^1(t) \leq D~,
\end{align*}
$u^1(t)$ is also a feasible control for problem~\eqref{optimization_onestep_simplified} with $\subscr{C}{ref}^2$, and results in the cost $J(\subscr{C}{ref}^1)$. Since $J(\subscr{C}{ref}^2)$ is the minimum cost over the set of all feasible controls which include $u^1(t)$, we must have $J(\subscr{C}{ref}^1) \geq J(\subscr{C}{ref}^2)$.
\end{proof}

In other words, $J$ is non-increasing with respect to the parameter $\subscr{C}{ref}$, i.e., $J$ is monotonically decreasing (though may not be strictly monotonically decreasing). If $\subscr{C}{ref} = 0$, then $0 \leq \Delta C(t) \leq \subscr{C}{ref} = 0$, which implies that $u(t) = 0$. In this case, $J$ has the largest value
\begin{equation}
\subscr{J}{max} = J(0) = \int_{t_0}^{t_0 + T} \subscr{C}{g}(\tau) (\subscr{P}{load}(\tau) - \subscr{\eta}{pv}\subscr{P}{pv}(\tau)) d \tau~. \label{eq:Jmax}
\end{equation}

Proposition~\ref{prop:monotonic} also justifies the storage size determination problem. Note that the critical value $\subpscr{C}{ref}{c}$ (as defined in Problem~\ref{problem:battery_size}) is unique as shown below.

\begin{proposition}
Given the optimization problem in Eq.~\eqref{optimization_onestep_simplified} with fixed $t_0, T, K, Z, T_c$, and $D$, $\subpscr{C}{ref}{c}$ is unique. \label{prop:uniqueness}
\end{proposition}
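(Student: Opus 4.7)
The plan is to argue by contradiction directly from the two defining conditions of the critical value in Problem~\ref{problem:battery_size}; the monotonicity result in Proposition~\ref{prop:monotonic} is not needed for uniqueness itself, only for existence. Suppose, for the sake of contradiction, that two distinct nonnegative values, say $C_1$ and $C_2$ with $C_1 < C_2$, both satisfy the conditions that characterize $\subpscr{C}{ref}{c}$ in Problem~\ref{problem:battery_size}.

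The first step is to apply the ``flat beyond threshold'' clause of the definition to $C_1$: since $C_2 \geq C_1$, this clause forces
\[
J(C_2) = J(C_1).
\]
The second step is to apply the ``strictly larger below threshold'' clause of the definition to $C_2$: since $C_1 < C_2$, this clause forces
\[
J(C_1) > J(C_2).
\]
These two relations are in direct contradiction, so no two distinct values can simultaneously satisfy the definition, establishing uniqueness.

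I expect essentially no obstacle here, since the argument is purely a consistency check on the two clauses of Problem~\ref{problem:battery_size}. The only subtlety worth noting explicitly is that one does not need to verify that $\subpscr{C}{ref}{c}$ exists in order to prove uniqueness; existence (and hence the legitimacy of talking about ``the'' critical value) is what relies on Proposition~\ref{prop:monotonic} together with the boundedness of $J$ from below, whereas uniqueness follows purely from the incompatibility of the strict-inequality and equality parts of the definition when applied to two candidate thresholds.
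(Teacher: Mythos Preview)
Your proof is correct and follows essentially the same contradiction argument as the paper: assume two critical values $C_1 < C_2$, derive $J(C_2)=J(C_1)$ from the equality clause applied to $C_1$ and $J(C_1)>J(C_2)$ from the strict-inequality clause applied to $C_2$, and conclude. Your added remark that monotonicity is needed only for existence, not uniqueness, is a nice clarification that the paper does not make explicit.
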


\begin{proof}
We prove it via contradiction. Suppose $\subpscr{C}{ref}{c}$ is not unique. In other words, there are two different critical values $\subscr{C}{ref}^{c_1}$ and $\subscr{C}{ref}^{c_2}$. Without loss of generality, suppose $\subscr{C}{ref}^{c_1} < \subscr{C}{ref}^{c_2}$. By definition, $J(\subscr{C}{ref}^{c_1}) > J(\subscr{C}{ref}^{c_2})$ because $\subscr{C}{ref}^{c_2}$ is a critical value, while $J(\subscr{C}{ref}^{c_1}) = J(\subscr{C}{ref}^{c_2})$ because $\subscr{C}{ref}^{c_1}$ is a critical value. A contradiction. Therefore, we must have $\subscr{C}{ref}^{c_1} = \subscr{C}{ref}^{c_2}$.
\end{proof}

Intuitively, if the unit cost for the battery capacity loss $K$ is higher (compared with purchasing electricity from the grid), then it might be preferable that the battery is not used at all, which results in $\subpscr{C}{ref}{c} = 0$, as shown below.

\begin{proposition}
Consider the optimization problem in Eq.~\eqref{optimization_onestep_simplified} with fixed $t_0, T, K, Z, T_c$, and $D$ under Assumption~\ref{assumption}.
\begin{itemize}
\item[\textbf{i)}] If $$K \geq \frac{(\max_{t} C_g(t) - \min_{t} C_g(t)) \subscr{\eta}{B}}{Z}~,$$ then $J(\subscr{C}{ref}) = \subscr{J}{max}$, which implies that $\subpscr{C}{ref}{c} = 0$;
\item[\textbf{ii)}] if $$K < \frac{(\max_{t} C_g(t) - \min_{t} C_g(t)) \subscr{\eta}{B}}{Z}~,$$ then \begin{align*}
J(\subscr{C}{ref}) \geq &\subscr{J}{max} - (\max_{t} C_g(t) - \min_{t} C_g(t) - \frac{KZ}{\subscr{\eta}{B}}) \times\\
&T \times  (D + \max_{t } (\subscr{\eta}{pv}\subscr{P}{pv}(t) - \subscr{P}{load}(t)))~,
\end{align*}
where $\max_t$ and $\min_t$ are calculated for $t \in [t_0, t_0 + T]$.
\end{itemize} \label{prop:K}
\end{proposition}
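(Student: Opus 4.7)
The plan is to exploit the identity
\begin{equation*}
J(\subscr{C}{ref}) = \subscr{J}{max} + \int_{t_0}^{t_0 + T} \subscr{C}{g}(\tau) u(\tau)\, d\tau + K \Delta C(t_0 + T),
\end{equation*}
which follows directly from Eq.~\eqref{eq:Jmax} and the rewritten objective in Eq.~\eqref{optimization_onestep_simplified}. Writing $u^+(\tau) = \max\{u(\tau),0\}$ and $u^-(\tau) = -\min\{u(\tau),0\}$, and letting $A = \int_{t_0}^{t_0+T} u^+(\tau)\, d\tau \geq 0$, $B = \int_{t_0}^{t_0+T} u^-(\tau)\, d\tau \geq 0$, integrating the aging dynamic gives $\Delta C(t_0+T) = \frac{Z}{\subscr{\eta}{B}} B$, while integrating the battery dynamic with $E_B(t_0) = 0$ and $E_B(t_0+T) \geq 0$ gives $\subscr{\eta}{B} A - \frac{1}{\subscr{\eta}{B}} B \geq 0$, i.e., $B \leq \subscr{\eta}{B}^2 A \leq A$. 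Setting $c_{\min} = \min_t \subscr{C}{g}(t) \geq 0$ and $c_{\max} = \max_t \subscr{C}{g}(t)$, and bounding $\subscr{C}{g}$ by $c_{\min}$ on $\{u>0\}$ and by $c_{\max}$ on $\{u<0\}$, I obtain the master inequality
\begin{equation*}
J(\subscr{C}{ref}) - \subscr{J}{max} \geq c_{\min} A - \Big(c_{\max} - \frac{KZ}{\subscr{\eta}{B}}\Big) B.
\end{equation*}

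For part (i), the hypothesis $K \geq \frac{(c_{\max} - c_{\min}) \subscr{\eta}{B}}{Z}$ rearranges to $c_{\max} - \frac{KZ}{\subscr{\eta}{B}} \leq c_{\min}$, so, using $B \geq 0$ and then $A \geq B$, the master inequality collapses to $c_{\min}(A - B) \geq 0$. Hence $J(\subscr{C}{ref}) \geq \subscr{J}{max}$ for every feasible control, and in particular at the optimum. The reverse inequality follows from Proposition~\ref{prop:monotonic} and Eq.~\eqref{eq:Jmax}: $J(\subscr{C}{ref}) \leq J(0) = \subscr{J}{max}$. Therefore $J(\subscr{C}{ref}) = \subscr{J}{max}$ for every $\subscr{C}{ref} \geq 0$, which by Problem~\ref{problem:battery_size} forces $\subpscr{C}{ref}{c} = 0$.

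For part (ii), the coefficient $c_{\max} - c_{\min} - \frac{KZ}{\subscr{\eta}{B}}$ is now strictly positive, so I would apply $A \geq B$ in the opposite direction to obtain
\begin{equation*}
J(\subscr{C}{ref}) - \subscr{J}{max} \geq - \Big(c_{\max} - c_{\min} - \frac{KZ}{\subscr{\eta}{B}}\Big) B,
\end{equation*}
after which it only remains to upper-bound $B$. The peak-shaving constraint $\subscr{P}{load}(\tau) - \subscr{\eta}{pv}\subscr{P}{pv}(\tau) + u(\tau) \leq D$ is the right tool: it gives $u^+(\tau) \leq D + \subscr{\eta}{pv}\subscr{P}{pv}(\tau) - \subscr{P}{load}(\tau) \leq M$ with $M := D + \max_t(\subscr{\eta}{pv}\subscr{P}{pv}(t) - \subscr{P}{load}(t))$, hence $A \leq MT$ and $B \leq A \leq MT$. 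Substituting yields the claimed bound exactly.

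The main subtlety is the choice of coupling between $A$ and $B$: the per-instant bounds by $c_{\min}$ and $c_{\max}$ are purposely the loosest possible, and it is only the global energy-balance inequality $B \leq \subscr{\eta}{B}^2 A$ --- a consequence of the round-trip efficiency together with the initial condition $E_B(t_0)=0$ --- that ties them tightly enough to (a) collapse the bound to zero in part (i) and (b) reduce it to a linear function of the charging budget $MT$ in part (ii). One should also note that $M > 0$ under Assumption~\ref{assumption}, since $S_3$ is nonempty, ensuring the bound in (ii) is non-vacuous.
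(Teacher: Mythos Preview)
Your proposal is correct and follows essentially the same route as the paper: decompose $J-\subscr{J}{max}$ into contributions from $u>0$ and $u<0$, bound $\subscr{C}{g}$ by $c_{\min}$ and $c_{\max}$ on the respective sets, use the energy balance $E_B(t_0+T)\geq 0$ to get $B\leq \eta_B^2 A\leq A$, and in part~(ii) bound $A$ (hence $B$) by $MT$ via the peak-shaving constraint. Your presentation is in fact slightly cleaner than the paper's, since your single master inequality avoids the two-subcase split the paper uses in part~(i).
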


\begin{proof}
The cost function can be rewritten as
\begin{align}
J(\subscr{C}{ref}) = &\subscr{J}{max} + \int_{t_0}^{t_0 + T} \subscr{C}{g}(\tau) u(\tau) d \tau + K \Delta C(t_0 + T)~\nonumber\\
=&\subscr{J}{max} + \int_{t_0}^{t_0 + T} \subscr{C}{g}(\tau) u(\tau) d \tau + \nonumber\\
&K \int_{t_0}^{t_0+ T} - Z \frac{u(\tau)}{\subscr{\eta}{B}}|_{u(\tau)<0} d \tau~\nonumber\\
= &\subscr{J}{max} + J_+ + J_-~,\nonumber
\end{align}
where $$J_+ = \int_{t_0}^{t_0 + T} \subscr{C}{g}(\tau) u(\tau)|_{u(\tau) > 0} d \tau~,$$ and $$J_- = \int_{t_0}^{t_0 + T} (\subscr{C}{g}(\tau) - \frac{KZ}{\subscr{\eta}{B}}) u(\tau)|_{u(\tau) <0} d \tau~.$$ Note that $J_+ \geq 0$ because the integrand $\subscr{C}{g}(\tau) u(\tau)|_{u(\tau) > 0}$ is always nonnegative.

\textbf{i)} We first consider $K \geq \frac{(\max_{t} C_g(t) - \min_{t} C_g(t)) \subscr{\eta}{B} }{Z}$. There are two possibilities:
\begin{itemize}
\item $K \geq \frac{(\max_{t} C_g(t)) \subscr{\eta}{B}}{Z}$, or equivalently, $\max_{t} C_g(t) \leq \frac{KZ}{\subscr{\eta}{B}}$. Thus, for any $t \in [t_0, t_0 + T]$, $C_g(t) - \frac{KZ}{\subscr{\eta}{B}} \leq 0$, which implies that $J_- \geq 0$. Therefore, $J(\subscr{C}{ref}) \geq \subscr{J}{max}$.
\item $\frac{(\max_{t} C_g(t) - \min_{t} C_g(t))\subscr{\eta}{B} }{Z} \leq K < \frac{(\max_{t} C_g(t))\subscr{\eta}{B}}{Z}$. Let $A_1 = \max_t C_g(t) - \frac{KZ}{\subscr{\eta}{B}}$, then $A_1 > 0$. Let $A_2 = \min_t C_g(t)$, then $A_2 \geq 0$. Since $\frac{(\max_{t} C_g(t) - \min_{t} C_g(t))\subscr{\eta}{B} }{Z} \leq K$, $A_2 \geq A_1$. Now we have
    $J_+ \geq A_2 \int_{t_0}^{t_0 + T} u(\tau)|_{u(\tau) > 0} d \tau$,
    and $J_- \geq A_1 \int_{t_0}^{t_0 + T} u(\tau)|_{u(\tau) <0} d \tau$. Therefore,
\begin{align*}
J(\subscr{C}{ref}) \geq &\subscr{J}{max} + A_2 \int_{t_0}^{t_0 + T} u(\tau)|_{u(\tau) > 0} d \tau + \\
&A_1 \int_{t_0}^{t_0 + T} u(\tau)|_{u(\tau) <0} d \tau\\
= &\subscr{J}{max} + (A_2 - A_1) \int_{t_0}^{t_0 + T} u(\tau)|_{u(\tau) > 0} d \tau +\\
&A_1 \int_{t_0}^{t_0 + T} u(\tau) d \tau\\
\geq &\subscr{J}{max} + A_1 \int_{t_0}^{t_0 + T} u(\tau) d \tau~.
\end{align*}
Since $E_B(t_0 + T) = E_B(t_0) + \int_{t_0}^{t_0 + T} P_B(\tau) d \tau \geq 0$ and $E_B(t_0) = 0$, we have
\begin{align}
0 \leq &\int_{t_0}^{t_0 + T} P_B(\tau) d \tau \nonumber\\
= & \int_{t_0}^{t_0 + T} P_B(\tau) |_{P_B(\tau) > 0} d \tau + \int_{t_0}^{t_0 + T} P_B(\tau) |_{P_B(\tau) < 0} d \tau \nonumber\\
= & \int_{t_0}^{t_0 + T} u(\tau) \subscr{\eta}{B} |_{u(\tau) > 0} d \tau + \int_{t_0}^{t_0 + T} \frac{u(\tau)}{\subscr{\eta}{B}} |_{u(\tau) < 0} d \tau \nonumber\\
\leq &\int_{t_0}^{t_0 + T} u(\tau)|_{u(\tau) > 0} d \tau + \int_{t_0}^{t_0 + T} u(\tau) |_{u(\tau) < 0} d \tau \nonumber\\
= &\int_{t_0}^{t_0 + T} u(\tau) d \tau~. \label{eq:int_u}
\end{align}
Therefore, we have $J(\subscr{C}{ref}) \geq \subscr{J}{max}$.
\end{itemize}
In summary, if $K \geq \frac{(\max_{t} C_g(t) - \min_{t} C_g(t)) \subscr{\eta}{B} }{Z}$, we have $J(\subscr{C}{ref}) \geq \subscr{J}{max}$, which implies that $J(\subscr{C}{ref}) \geq J(0)$. Since $J(\subscr{C}{ref})$ is a non-increasing function of $\subscr{C}{ref}$, we also have $J(\subscr{C}{ref}) \leq J(0)$. Therefore, we must have $J(\subscr{C}{ref}) = J(0) = \subscr{J}{max}$ for $\subscr{C}{ref} \geq 0$, and $\subpscr{C}{ref}{c} = 0$ by definition.

\textbf{ii)} We now consider $K < \frac{(\max_{t} C_g(t) - \min_{t} C_g(t)) \subscr{\eta}{B}}{Z}$, which implies that $A_1 > A_2 \geq 0$. Then
\begin{align*}
J(\subscr{C}{ref}) \geq &\subscr{J}{max} + A_2 \int_{t_0}^{t_0 + T} u(\tau)|_{u(\tau) > 0} d \tau +\\
&A_1 \int_{t_0}^{t_0 + T} u(\tau)|_{u(\tau) <0} d \tau\\
= &\subscr{J}{max} + A_2 \int_{t_0}^{t_0 + T} u(\tau) d \tau +\\
&(A_1 - A_2) \int_{t_0}^{t_0 + T} u(\tau)|_{u(\tau) <0} d \tau~.
\end{align*}
Since $\int_{t_0}^{t_0 + T} u(\tau) d \tau \geq 0$ as argued in the proof to \textbf{i)},  $J(\subscr{C}{ref}) \geq \subscr{J}{max} + (A_1 - A_2) \int_{t_0}^{t_0 + T} u(\tau)|_{u(\tau) <0} d \tau$. Now we try to lower bound $\int_{t_0}^{t_0 + T} u(\tau)|_{u(\tau) <0} d \tau$. Note that $\int_{t_0}^{t_0 + T} u(\tau) d \tau \geq 0$ (as shown in Eq.~\eqref{eq:int_u}) implies that $$\int_{t_0}^{t_0 + T} u(\tau)|_{u(\tau) <0} d \tau \geq - \int_{t_0}^{t_0 + T} u(\tau)|_{u(\tau) > 0} d \tau~.$$ Given Assumption~\ref{assumption}, $S_3$ is nonempty, which implies that $D + \max_t (\subscr{\eta}{pv}\subscr{P}{pv}(t) - \subscr{P}{load}(t)) > 0$. Since $u(t) \leq D + \subscr{\eta}{pv}\subscr{P}{pv}(t) - \subscr{P}{load}(t) \leq D + \max_t (\subscr{\eta}{pv}\subscr{P}{pv}(t) - \subscr{P}{load}(t))$, $\int_{t_0}^{t_0 + T} u(\tau)|_{u(\tau) > 0} d \tau \leq T \times (D + \max_t (\subscr{\eta}{pv}\subscr{P}{pv}(t) - \subscr{P}{load}(t)))$, which implies that
\begin{equation}
\resizebox{.85\hsize}{!}{$\int_{t_0}^{t_0 + T} u(\tau)|_{u(\tau) <0} d \tau \geq - T \times (D + \max_t (\subscr{\eta}{pv}\subscr{P}{pv}(t) - \subscr{P}{load}(t)))$}~. \label{eq:lower_bound_int}
\end{equation}
In summary, $J(\subscr{C}{ref}) \geq \subscr{J}{max} - (A_1 - A_2) \times T \times (D + \max_t (\subscr{\eta}{pv}\subscr{P}{pv}(t) - \subscr{P}{load}(t)))$, which proves the result.
\end{proof}

\begin{remark}
Note that \textbf{i)} in Proposition~\ref{prop:K} provides a criterion for evaluating the economic value of batteries compared to purchasing electricity from the grid. In other words, only if the unit cost of the battery capacity loss satisfies $$K < \frac{(\max_{t} C_g(t) - \min_{t} C_g(t)) \subscr{\eta}{B} }{Z}~,$$ it is desirable to use battery storages. This criterion depends on the pricing signal (especially the difference between the maximum and the minimum of the pricing signal), the conversion efficiency of the battery converters, and the aging coefficient $Z$. \hfill $\blacksquare$
\end{remark}

Given the result in Proposition~\ref{prop:K}, we impose the following additional assumption on the unit cost of the battery capacity loss to guarantee that $\subpscr{C}{ref}{c}$ is positive.

\begin{assumption}
In the optimization problem in Eq.~\eqref{optimization_onestep_simplified}, $$K < \frac{(\max_{t} C_g(t) - \min_{t} C_g(t)) \subscr{\eta}{B} }{Z}~.$$ \label{assumption_price}
\end{assumption}

\vspace{-15pt} In other words, the cost of the battery capacity loss during operations is less than the potential gain expressed as the margin between peak and off-peak prices modified by the conversion efficiency of the battery converters and the battery aging coefficient. Since $J(\subscr{C}{ref})$ is a non-increasing function of $\subscr{C}{ref}$ and lower bounded by a finite value given Assumptions~\ref{assumption} and~\ref{assumption_price}, the storage size determination problem is well defined. Now we show lower and upper bounds on the critical battery capacity in the following proposition.

\begin{proposition}
Consider the optimization problem in Eq.~\eqref{optimization_onestep_simplified} with fixed $t_0, T, K, Z, T_c$, and $D$ under Assumptions~\ref{assumption} and~\ref{assumption_price}. Then $\subpscr{C}{ref}{lb} \leq \subpscr{C}{ref}{c} \leq \subpscr{C}{ref}{ub}$, where $$\subpscr{C}{ref}{lb} = \max(\frac{T_c}{\subscr{\eta}{B}} \times (\max_{t} (\subscr{P}{load}(t) - \subscr{\eta}{pv}\subscr{P}{pv}(t)) - D), ~0)~,$$
and $$\subpscr{C}{ref}{ub} = \max (\subscr{\eta}{B} T_c + \frac{Z T}{\subscr{\eta}{B}}, ~\subscr{\eta}{B} T) \times (D + \max_t (\subscr{\eta}{pv}\subscr{P}{pv}(t) - \subscr{P}{load}(t))~.$$ \label{prop:upperbound}
\end{proposition}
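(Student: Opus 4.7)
The plan is to handle the two inequalities in $\subpscr{C}{ref}{lb}\leq\subpscr{C}{ref}{c}\leq\subpscr{C}{ref}{ub}$ separately. For the lower bound I argue that any $\subscr{C}{ref}<\subpscr{C}{ref}{lb}$ renders the optimization in Eq.~\eqref{optimization_onestep_simplified} infeasible, so $J(\subscr{C}{ref})=+\infty$ whereas $J(\subpscr{C}{ref}{c})$ is finite (Assumption~\ref{assumption} secures a feasible control once $\subscr{C}{ref}$ is large enough); Problem~\ref{problem:battery_size} then forces $\subpscr{C}{ref}{c}\geq\subpscr{C}{ref}{lb}$. For the upper bound I combine the non-increasing property of $J$ from Proposition~\ref{prop:monotonic} with a feasibility-transfer argument: any control optimal at some $\subscr{C}{ref}>\subpscr{C}{ref}{ub}$ remains feasible when the capacity is shrunk to $\subpscr{C}{ref}{ub}$, whence $J(\subpscr{C}{ref}{ub})\leq J(\subscr{C}{ref})$, and combining with monotonicity gives equality.

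For the lower bound, the case $\max_t(\subscr{P}{load}(t)-\subscr{\eta}{pv}\subscr{P}{pv}(t))\leq D$ makes $\subpscr{C}{ref}{lb}=0$ and the claim vacuous, so I assume this max exceeds $D$ and let $t^*$ attain it. The peak-shaving constraint at $t^*$ forces $u(t^*)\leq D+\subscr{\eta}{pv}\subscr{P}{pv}(t^*)-\subscr{P}{load}(t^*)<0$, so $|u(t^*)|\geq\max_t(\subscr{P}{load}(t)-\subscr{\eta}{pv}\subscr{P}{pv}(t))-D$. Rearranging the discharge-rate constraint $-u(t)T_c/\subscr{\eta}{B}+\Delta C(t)\leq\subscr{C}{ref}$ together with $\Delta C(t^*)\geq 0$ yields $|u(t^*)|\leq\subscr{\eta}{B}\subscr{C}{ref}/T_c<\subscr{\eta}{B}\subpscr{C}{ref}{lb}/T_c=\max_t(\subscr{P}{load}(t)-\subscr{\eta}{pv}\subscr{P}{pv}(t))-D$, a contradiction.

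For the upper bound, set $M:=D+\max_t(\subscr{\eta}{pv}\subscr{P}{pv}(t)-\subscr{P}{load}(t))>0$ (positive by Assumption~\ref{assumption}). Any feasible $u$ satisfies $u(t)\leq M$ pointwise by peak shaving, so $\int_{t_0}^{t}u(\tau)|_{u(\tau)>0}\,d\tau\leq MT$. Mimicking the computation in Eq.~\eqref{eq:int_u}, the constraint $E_B(t)\geq 0$ with $E_B(t_0)=0$ gives $\int_{t_0}^{t}u(\tau)|_{u(\tau)<0}\,d\tau\geq-\subscr{\eta}{B}^2\int_{t_0}^{t}u(\tau)|_{u(\tau)>0}\,d\tau$ and, a fortiori, $\int_{t_0}^{t}u(\tau)\,d\tau\geq 0$. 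These produce the $\subscr{C}{ref}$-independent bounds $E_B(t)\leq\subscr{\eta}{B} MT$ and $\Delta C(t)\leq ZMT/\subscr{\eta}{B}$, which combined with $u(t)\leq M$ whenever $u(t)>0$ deliver $\subscr{\eta}{B} u(t)T_c+\Delta C(t)\leq(\subscr{\eta}{B} T_c+ZT/\subscr{\eta}{B})M\leq\subpscr{C}{ref}{ub}$ and, after a short case split on whether $Z\leq 1$ or $Z>1$ that uses the sharper lower bound on $\int u|_{u<0}$, also $E_B(t)+\Delta C(t)\leq\subpscr{C}{ref}{ub}$. The $\max$ in the formula for $\subpscr{C}{ref}{ub}$ is exactly what is needed to dominate both expressions across those regimes.

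The hardest step will be the remaining discharge-rate constraint $-u(t)T_c/\subscr{\eta}{B}+\Delta C(t)\leq\subpscr{C}{ref}{ub}$ when $u(t)<0$, because nothing in the problem imposes a pointwise lower bound on $u$; indeed Assumption~\ref{assumption_price} actively rewards aggressive discharge whenever the spot price is high enough, so the optimizer at a larger $\subscr{C}{ref}$ could a priori feature a discharge spike that violates the rate constraint at $\subpscr{C}{ref}{ub}$. I expect to close this either by a rearrangement of the optimal $u$ that replaces any over-rate discharge pulse with a slower pulse of equal integral $\int u|_{u<0}$ (preserving $\Delta C(t_0+T)$) localized within a window of identical time-of-use price (preserving $\int\subscr{C}{g}(\tau)u(\tau)\,d\tau$), or---if such smoothing is obstructed by the piecewise-constant structure of $\subscr{C}{g}$---by a direct argument that any optimizer can be taken with $|u(t)|\leq\subscr{\eta}{B}(\subpscr{C}{ref}{ub}-\Delta C(t))/T_c$ during discharge thanks to the finite capacity-loss budget. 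Once this check is secured the transferred control is fully admissible at $\subpscr{C}{ref}{ub}$, and the chain of inequalities closes.
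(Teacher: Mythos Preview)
Your lower-bound argument is essentially the paper's: both pick the instant $t^{*}$ at which $\subscr{P}{load}-\subscr{\eta}{pv}\subscr{P}{pv}$ is maximal, observe that peak shaving forces $u(t^{*})$ to be large and negative there, and then show the discharge-rate cap $|u|\le\subscr{\eta}{B}(\subscr{C}{ref}-\Delta C)/T_c$ is violated whenever $\subscr{C}{ref}<\subpscr{C}{ref}{lb}$. For the upper bound your strategy also coincides with the paper's, though the packaging differs slightly: the paper first shows that for $\subscr{C}{ref}\ge\subpscr{C}{ref}{ub}$ one has $\subscr{P}{Bmax}\ge\subscr{\eta}{B} M$ (with $M=D+\max_t(\subscr{\eta}{pv}\subscr{P}{pv}-\subscr{P}{load})$), so the charging-rate constraint is slack, and then bounds the total charged energy by $\subscr{\eta}{B} TM\le\subpscr{C}{ref}{ub}$; you instead bound $E_B$, $\Delta C$, and $\subscr{\eta}{B} uT_c+\Delta C$ directly from the peak-shaving inequality $u\le M$. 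Both routes arrive at the same estimates.

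Where you diverge from the paper is in candor about the remaining step. The paper's proof never mentions the discharge-rate constraint $-u(t)T_c/\subscr{\eta}{B}+\Delta C(t)\le\subscr{C}{ref}$: it asserts that ``if $\subscr{C}{ref}\ge\subpscr{C}{ref}{ub}$, the electricity that can be stored never exceeds $\subpscr{C}{ref}{ub}$'' and concludes $\subpscr{C}{ref}{c}\le\subpscr{C}{ref}{ub}$ directly, without ever checking whether an optimizer at a larger $\subscr{C}{ref}$ might exploit the looser discharge-rate cap. You correctly identify this as the crux and leave it as a plan; the paper simply passes over it. So your proposal is no less complete than the paper's own argument---the gap you flag is a genuine gap in the published proof as well, and neither your rearrangement idea nor the capacity-loss-budget idea appears there.
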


\vspace{-15pt} \begin{proof} We first show the lower bound via contradiction. Without loss of generality, we assume that $\frac{T_c}{\subscr{\eta}{B}} \times (\max_{t} (\subscr{P}{load}(t) - \subscr{\eta}{pv}\subscr{P}{pv}(t)) - D) > 0$ (because we require $\subpscr{C}{ref}{c} \geq 0$). Suppose $$\subpscr{C}{ref}{c} < \subpscr{C}{ref}{lb} = \frac{T_c}{\subscr{\eta}{B}} \times (\max_{t} (\subscr{P}{load}(t) - \subscr{\eta}{pv}\subscr{P}{pv}(t)) - D)~,$$ or equivalently,
$$D < -\frac{\subpscr{C}{ref}{c} \subscr{\eta}{B}}{T_c} + \max_t(\subscr{P}{load}(t) - \subscr{\eta}{pv}\subscr{P}{pv}(t))~.$$
Therefore, there exists $t_1 \in [t_0, t_0 + T]$ such that $D < -\frac{\subpscr{C}{ref}{c} \subscr{\eta}{B}}{T_c} + \subscr{P}{load}(t_1) - \subscr{\eta}{pv}\subscr{P}{pv}(t_1)$.  Since $u(t_1) \leq D - \subscr{P}{load}(t_1) + \subscr{\eta}{pv}\subscr{P}{pv}(t_1) < -\frac{\subpscr{C}{ref}{c} \subscr{\eta}{B}}{T_c} \leq 0$, we have  \begin{align*}
P_B(t_1) = \frac{u(t_1)}{\subscr{\eta}{B}} &\leq \frac{D + \subscr{\eta}{pv}\subscr{P}{pv}(t_1) - \subscr{P}{load}(t_1)}{\subscr{\eta}{B}} \\
&< -\frac{\subpscr{C}{ref}{c}}{T_c} \leq -\frac{\subscr{C}{ref} - \Delta C(t_1)}{T_c} = \subscr{P}{Bmin}~.
\end{align*}
The implication is that the control does not satisfy the discharging constraint at $t_1$. Therefore, $\subpscr{C}{ref}{lb} \leq \subpscr{C}{ref}{c}$.

To show $\subpscr{C}{ref}{c} \leq \subpscr{C}{ref}{ub}$, it is sufficient to show that if $\subscr{C}{ref} \geq \subpscr{C}{ref}{ub}$, the electricity that can be stored never exceeds $\subpscr{C}{ref}{ub}$.

Note that the battery charging is limited by $\subscr{P}{Bmax}$, i.e., $P_B(t) = \subscr{\eta}{B} u(t) \leq \subscr{P}{Bmax}$. Now we try to lower bound $\subscr{P}{Bmax}$. $$\subscr{P}{Bmax} = \frac{\subscr{C}{ref} - \Delta C(t)}{T_c} \geq \frac{\subscr{C}{ref} - \Delta C(t_0 + T)}{T_c}~,$$ in which the second inequality holds because $\Delta C(t)$ is a non-decreasing function of $t$. By applying the aging model in Eq.~\eqref{eq:battery_aging}, we have $$\resizebox{.95\hsize}{!}{$\subscr{P}{Bmax} \geq \frac{\subscr{C}{ref} - \Delta C(t_0 + T)}{T_c} = \frac{\subscr{C}{ref} + \int_{t_0}^{t_0 + T} Z \frac{u(\tau)}{\subscr{\eta}{B}}|_{u(\tau) <0} d \tau}{T_c}~.$}$$ Using Eq.~\eqref{eq:lower_bound_int}, we have $$\subscr{P}{Bmax} \geq \frac{\subscr{C}{ref} - \frac{Z T}{\subscr{\eta}{B}} \times (D + \max_t (\subscr{\eta}{pv}\subscr{P}{pv}(t) - \subscr{P}{load}(t)))}{T_c}~.$$ Since $\subscr{C}{ref} \geq \subpscr{C}{ref}{ub}$, we have
\begin{align*}
\subscr{P}{Bmax} \geq &\resizebox{.86\hsize}{!}{$\frac{(\max(\subscr{\eta}{B} T_c + \frac{Z T}{\subscr{\eta}{B}}, \subscr{\eta}{B} T) - \frac{Z T}{\subscr{\eta}{B}}) \times (D + \max_t (\subscr{\eta}{pv}\subscr{P}{pv}(t) - \subscr{P}{load}(t)))}{T_c}$}\\
\geq &\subscr{\eta}{B} (D + \max_t (\subscr{\eta}{pv}\subscr{P}{pv}(t) - \subscr{P}{load}(t)))~.
\end{align*}
Therefore, $u(t) \leq D + \subscr{\eta}{pv}\subscr{P}{pv}(t) - \subscr{P}{load}(t) \leq D + \max_t (\subscr{\eta}{pv}\subscr{P}{pv}(t) - \subscr{P}{load}(t)) \leq \frac{\subscr{P}{Bmax}}{\subscr{\eta}{B}}$, which implies that $P_B(t) = \subscr{\eta}{B} u(t) \leq \subscr{P}{Bmax}$. Thus, the only constraint on $u$ related to the battery charging is $u(t) \leq D + \max_t (\subscr{\eta}{pv}\subscr{P}{pv}(t) - \subscr{P}{load}(t))$. In turn, during the time interval $[t_0, t_0 + T]$, the maximum amount of electricity that can be charged is
\begin{align*}
&\int_{t_0}^{t_0 + T} P_B(\tau) |_{P_B(\tau) > 0} d\tau = \int_{t_0}^{t_0 + T} \subscr{\eta}{B} u(\tau) |_{u(\tau) > 0} d\tau\\
\leq &\subscr{\eta}{B} T \times (D + \max_t (\subscr{\eta}{pv}\subscr{P}{pv}(t) - \subscr{P}{load}(t)))~,
\end{align*} which is less than or equal to $\subpscr{C}{ref}{ub}$. In summary, if $\subscr{C}{ref} \geq \subpscr{C}{ref}{ub}$, the amount of electricity that can be stored never exceeds $\subpscr{C}{ref}{ub}$, and therefore, $\subpscr{C}{ref}{c} \leq \subpscr{C}{ref}{ub}$.
\end{proof}

\begin{remark}
As discussed in Proposition~\ref{prop:existence}, if $S_3$ is empty, or equivalently, $(D + \max_t (\subscr{\eta}{pv}\subscr{P}{pv}(t) - \subscr{P}{load}(t)) \leq 0$, then $\subpscr{C}{ref}{ub} \leq 0$, which implies that $\subpscr{C}{ref}{c} = 0$; this is consistent with the result in Proposition~\ref{prop:existence} when $S_1$ is empty.\hfill $\blacksquare$
\end{remark}

\section{Algorithms for Calculating $\subpscr{C}{ref}{c}$} \label{section4}

In this section, we study algorithms for calculating the critical battery capacity $\subpscr{C}{ref}{c}$.

Given the storage size determination problem, one approach to calculate the critical battery capacity is by calculating the function $J(\subscr{C}{ref})$ and then choosing the $\subscr{C}{ref}$ such that the conditions in Problem~\ref{problem:battery_size} are satisfied. Though $\subscr{C}{ref}$ is a continuous variable, we can only pick a finite number of $\subscr{C}{ref}$'s and then approximate the function $J(\subscr{C}{ref})$. One way to pick these values is that we choose $\subscr{C}{ref}$ from $\subpscr{C}{ref}{ub}$ to $\subpscr{C}{ref}{lb}$ with the step size\footnote{Note that one way to implement the critical battery capacity in practice is to connect multiple identical batteries of fixed capacity $\subscr{C}{fixed}$ in parallel. In this case, $\subscr{\tau}{cap}$ can be chosen to be $\subscr{C}{fixed}$.} $-\subscr{\tau}{cap} < 0$. In other words, $$\subpscr{C}{ref}{i} = \subpscr{C}{ref}{ub} - i \times \subscr{\tau}{cap}~,$$ where\footnote{The ceiling function $\lceil x \rceil$ is the smallest integer which is larger than or equal to $x$.} $i = 0, 1, ..., L$ and $L = \lceil \frac{\subpscr{C}{ref}{ub} - \subpscr{C}{ref}{lb}}{\subscr{\tau}{cap}} \rceil$. Suppose the picked value is $\subpscr{C}{ref}{i}$, and then we solve the optimization problem in Eq.~\eqref{optimization_onestep_simplified} with $\subpscr{C}{ref}{i}$. Since  the battery dynamics and aging model are nonlinear functions of $u(t)$, we introduce a binary indicator variable $I_{u(t)}$, in which $I_{u(t)} = 0$ if $u(t) \geq 0$ and $I_{u(t)} = 1$ if $u(t) < 0$. In addition, we use $\delta t$ as the sampling interval, and discretize Eqs.~\eqref{eq:battery_dynamic} and~\eqref{eq:battery_aging} as
\begin{align*}
E_B(k+1) &= E_B(k) + P_B(k) \delta t~,\\
\Delta C(k+1) &=\begin{cases} \Delta C(k) - Z P_B(k) \delta t & \mathrm{if~} P_B(k) < 0\\
\Delta C(k) & \mathrm{otherwise}~.
\end{cases}
\end{align*}
With the indicator variable $I_{u(t)}$ and the discretization of continuous dynamics, the optimization problem in Eq.~\eqref{optimization_onestep_simplified} can be converted to a mixed integer programming problem with indicator constraints (such constraints are introduced in CPlex~\cite{yru_journal:CPLEX}), and can be solved using the CPlex solver~\cite{yru_journal:CPLEX} to obtain $J(\subpscr{C}{ref}{i})$. In the storage size determination problem, we need to check if $J(\subpscr{C}{ref}{i}) = J(\subpscr{C}{ref}{c})$, or equivalently, $J(\subpscr{C}{ref}{i}) = J(\subpscr{C}{ref}{ub})$; this is because $J(\subpscr{C}{ref}{c}) = J(\subpscr{C}{ref}{ub})$ due to $\subpscr{C}{ref}{c} \leq \subpscr{C}{ref}{ub}$ and the definition of $\subpscr{C}{ref}{c}$. Due to numerical issues in checking the equality, we  introduce a small constant $\subscr{\tau}{cost} > 0$ so that we treat $J(\subpscr{C}{ref}{i})$ the same as $J(\subpscr{C}{ref}{ub})$ if $J(\subpscr{C}{ref}{i}) - J(\subpscr{C}{ref}{ub}) < \subscr{\tau}{cost}$. Similarly, we treat $J(\subpscr{C}{ref}{i}) > J(\subpscr{C}{ref}{ub})$ if $J(\subpscr{C}{ref}{i}) - J(\subpscr{C}{ref}{ub}) \geq \subscr{\tau}{cost}$. The detailed algorithm is given in Algorithm~\ref{algorithm_simple}. At Step~4, if $J(\subpscr{C}{ref}{i}) - J(\subpscr{C}{ref}{0}) \geq \subscr{\tau}{cost}$, or equivalently, $J(\subpscr{C}{ref}{i}) - J(\subpscr{C}{ref}{ub}) \geq \subscr{\tau}{cost}$, we have $J(\subpscr{C}{ref}{i}) > J(\subpscr{C}{ref}{ub})$. Because of the monotonicity property in Proposition~\ref{prop:monotonic}, we know $J(\subpscr{C}{ref}{j}) \geq J(\subpscr{C}{ref}{i}) > J(\subpscr{C}{ref}{i-1}) = J(\subpscr{C}{ref}{ub})$ for any $j = i+1, ..., L$. Therefore, the \textbf{for} loop can be terminated, and the approximated critical battery capacity is $\subpscr{C}{ref}{i-1}$.

It can be verified that Algorithm~\ref{algorithm_simple} stops after at most $L+1$ steps, or equivalently, after solving at most $$\lceil \frac{\subpscr{C}{ref}{ub} - \subpscr{C}{ref}{lb}}{\subscr{\tau}{cap}} \rceil + 1$$ optimization problems in Eq.~\eqref{optimization_onestep_simplified}. The accuracy of the critical battery capacity is controlled by the parameters $\delta t, \subscr{\tau}{cap}, \subscr{\tau}{cost}$. Fixing $\delta t, \subscr{\tau}{cost}$, the output is within $$[\subpscr{C}{ref}{c} - \subscr{\tau}{cap}, \subpscr{C}{ref}{c} + \subscr{\tau}{cap}]~.$$ Therefore, by decreasing $\subscr{\tau}{cap}$, the critical battery capacity can be approximated with an arbitrarily prescribed precision.

\begin{algorithm}[tb] \small
\caption{Simple Algorithm for Calculating $\subpscr{C}{ref}{c}$} \label{algorithm_simple}

\begin{algorithmic}

\REQUIRE The optimization problem in Eq.~\eqref{optimization_onestep_simplified} with fixed $t_0, T, K, Z, T_c, D$ under Assumptions~\ref{assumption} and~\ref{assumption_price}, the calculated bounds $\subpscr{C}{ref}{lb}, \subpscr{C}{ref}{ub}$, and parameters $\delta t, \subscr{\tau}{cap}, \subscr{\tau}{cost}$

\ENSURE An approximation of $\subpscr{C}{ref}{c}$ \vspace{2pt}

\end{algorithmic}

\begin{algorithmic}[1]

\STATE Initialize $\subpscr{C}{ref}{i} = \subpscr{C}{ref}{ub} - i \times \subscr{\tau}{cap}$, where $i = 0, 1, ..., L$ and $L = \lceil \frac{\subpscr{C}{ref}{ub} - \subpscr{C}{ref}{lb}}{\subscr{\tau}{cap}} \rceil$;

\FOR{$i = 0, 1, ..., L$}

\STATE Solve the optimization problem in Eq.~\eqref{optimization_onestep_simplified} with $\subpscr{C}{ref}{i}$, and obtain $J(\subpscr{C}{ref}{i})$;

\IF{$i \geq 1$ and $J(\subpscr{C}{ref}{i}) - J(\subpscr{C}{ref}{0}) \geq \subscr{\tau}{cost}$}

\STATE Set $\subpscr{C}{ref}{c} = \subpscr{C}{ref}{i-1}$, and exit the \textbf{for} loop;

\ENDIF

\ENDFOR

\STATE Output $\subpscr{C}{ref}{c}$.
\end{algorithmic}
\end{algorithm}

Since the function $J(\subscr{C}{ref})$ is a non-increasing function of $\subscr{C}{ref}$, we propose Algorithm~\ref{algorithm_efficient} based on the idea of bisection algorithms. More specifically, we maintain three variables $$\subpscr{C}{ref}{1} < \subpscr{C}{ref}{3} < \subpscr{C}{ref}{2}~,$$ in which $\subpscr{C}{ref}{1}$ (or $\subpscr{C}{ref}{2}$) is initialized as $\subpscr{C}{ref}{lb}$ (or $\subpscr{C}{ref}{ub}$). We set $\subpscr{C}{ref}{3}$ to be $\frac{\subpscr{C}{ref}{1} + \subpscr{C}{ref}{2}}{2}$. Due to Proposition~\ref{prop:monotonic}, we have $$J(\subpscr{C}{ref}{1}) \geq J(\subpscr{C}{ref}{3}) \geq J(\subpscr{C}{ref}{2})~.$$ If $J(\subpscr{C}{ref}{3}) = J(\subpscr{C}{ref}{2})$ (or equivalently, $J(\subpscr{C}{ref}{3}) = J(\subpscr{C}{ref}{ub})$; this is examined in Step~7), then we know that $\subpscr{C}{ref}{1} \leq \subpscr{C}{ref}{c} \leq \subpscr{C}{ref}{3}$; therefore, we update $\subpscr{C}{ref}{2}$ with $\subpscr{C}{ref}{3}$ but do not update\footnote{Note that if we update $J(\subpscr{C}{ref}{2})$ with $J(\subpscr{C}{ref}{3})$, then the difference between $J(\subpscr{C}{ref}{2})$ and $J(\subpscr{C}{ref}{ub})$ can be amplified when $\subpscr{C}{ref}{2}$ is updated again later on.} the value $J(\subpscr{C}{ref}{2})$. On the other hand, if $J(\subpscr{C}{ref}{3}) > J(\subpscr{C}{ref}{2})$, then we know that $\subpscr{C}{ref}{2} \geq \subpscr{C}{ref}{c} \geq \subpscr{C}{ref}{3}$; therefore, we update $\subpscr{C}{ref}{1}$ with $\subpscr{C}{ref}{3}$ and set $J(\subpscr{C}{ref}{1})$ with $J(\subpscr{C}{ref}{3})$. In this case, we also check if $\subpscr{C}{ref}{2} - \subpscr{C}{ref}{3} < \subscr{\tau}{cap}$: if it is, then output $\subpscr{C}{ref}{2}$ since we know the critical battery capacity is between $\subpscr{C}{ref}{3}$ and $\subpscr{C}{ref}{2}$; otherwise, the \textbf{while} loop is repeated. Since every execution of the \textbf{while} loop halves the interval $[\subpscr{C}{ref}{1}, \subpscr{C}{ref}{2}]$ starting from $[\subpscr{C}{ref}{lb}, \subpscr{C}{ref}{ub}]$, the maximum number of executions of the \textbf{while} loop is $\lceil \log_2 \frac{\subpscr{C}{ref}{ub} - \subpscr{C}{ref}{lb}}{\subscr{\tau}{cap}} \rceil$,
and the algorithm requires solving at most $$\lceil \log_2 \frac{\subpscr{C}{ref}{ub} - \subpscr{C}{ref}{lb}}{\subscr{\tau}{cap}} \rceil + 1$$ optimization problems in Eq.~\eqref{optimization_onestep_simplified}. This is in contrast to solving $\lceil \frac{\subpscr{C}{ref}{ub} - \subpscr{C}{ref}{lb}}{\subscr{\tau}{cap}} \rceil + 1$ optimization problems in Eq.~\eqref{optimization_onestep_simplified} using Algorithm~\ref{algorithm_simple}.

\begin{algorithm}[tb] \small
\caption{Efficient Algorithm for Calculating $\subpscr{C}{ref}{c}$} \label{algorithm_efficient}

\begin{algorithmic}

\REQUIRE The optimization problem in Eq.~\eqref{optimization_onestep_simplified} with fixed $t_0, T, K, Z, T_c, D$ under Assumptions~\ref{assumption} and~\ref{assumption_price}, the calculated bounds $\subpscr{C}{ref}{lb}, \subpscr{C}{ref}{ub}$, and parameters $\delta t, \subscr{\tau}{cap}, \subscr{\tau}{cost}$

\ENSURE An approximation of $\subpscr{C}{ref}{c}$ \vspace{2pt}

\end{algorithmic}

\begin{algorithmic}[1]

\STATE Let $\subpscr{C}{ref}{1} = \subpscr{C}{ref}{lb}$ and $\subpscr{C}{ref}{2} = \subpscr{C}{ref}{ub}$;

\STATE Solve the optimization problem in Eq.~\eqref{optimization_onestep_simplified} with $\subpscr{C}{ref}{2}$, and obtain $J(\subpscr{C}{ref}{2})$;

\STATE Let $\mathrm{sign} = 1$;

\WHILE{$\mathrm{sign} = 1$}

\STATE Let $\subpscr{C}{ref}{3} = \frac{\subpscr{C}{ref}{1} + \subpscr{C}{ref}{2}}{2}$;

\STATE Solve the optimization problem in Eq.~\eqref{optimization_onestep_simplified} with $\subpscr{C}{ref}{3}$, and obtain $J(\subpscr{C}{ref}{3})$;

\IF{$J(\subpscr{C}{ref}{3}) - J(\subpscr{C}{ref}{2}) < \subscr{\tau}{cost}$}

\STATE Set $\subpscr{C}{ref}{2} = \subpscr{C}{ref}{3}$, and $J(\subpscr{C}{ref}{2}) = J(\subpscr{C}{ref}{ub})$;

\ELSE

\STATE Set $\subpscr{C}{ref}{1} = \subpscr{C}{ref}{3}$ and set $J(\subpscr{C}{ref}{1})$ with $J(\subpscr{C}{ref}{3})$;

\IF{$\subpscr{C}{ref}{2} - \subpscr{C}{ref}{3} < \subscr{\tau}{cap}$}

\STATE Set $\mathrm{sign} = 0$;

\ENDIF

\ENDIF

\ENDWHILE

\STATE Output $\subpscr{C}{ref}{c} = \subpscr{C}{ref}{2}$.

\end{algorithmic}
\end{algorithm}

\begin{remark}
Note that the critical battery capacity can be implemented by connecting batteries with fixed capacity in parallel because we only assume that the minimum battery charging time is fixed. \hfill $\blacksquare$
\end{remark}

\section{Simulations} \label{section5}

In this section, we calculate the critical battery capacity using Algorithm~\ref{algorithm_efficient}, and verify the results in Section~\ref{section3} via simulations. The parameters used in Section~\ref{section2} are chosen based on typical residential home settings and commercial buildings.

\subsection{Setting}
The GHI data is the measured GHI in July 2010 at La Jolla, California. In our simulations, we use $\eta = 0.15$, and $S = 10 m^2$. Thus  $\subscr{P}{pv}(t) = 1.5 \times \mathrm{GHI}(t) (W)$. We have two choices for $t_0$:
\begin{itemize}
\item $t_0$ is $0000$ h local standard time (LST) on Jul 8, 2010, and the hourly PV output is given in Fig.~\ref{fig6}(a) for the following four days starting from $t_0$ (this corresponds to the scenario in which there are relatively large variations in the PV output);

\item $t_0$ is $0000$ h LST on Jul 13, 2010, and the hourly PV output is given in Fig.~\ref{fig6}(b) for the following four days starting from $t_0$ (this corresponds to the scenario in which there are relatively small variations in the PV output).
\end{itemize}

\begin{figure}[tb] \centering
\subfigure[Starting from July 8, 2011.]{\includegraphics[scale=0.28]{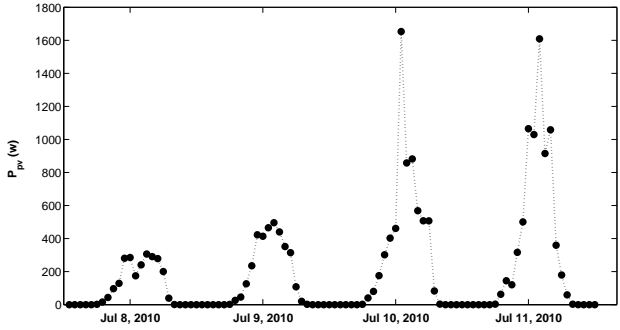}} \hfil
\subfigure[Starting from July 13, 2011.]{\includegraphics[scale=0.28]{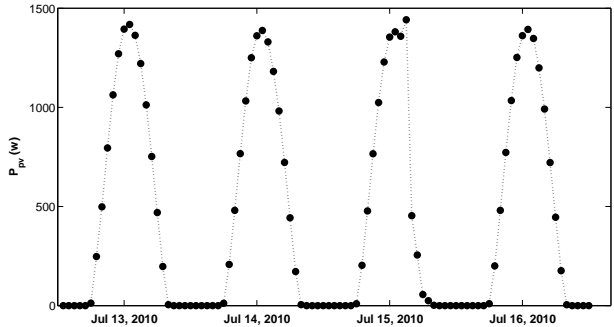}}
\caption{PV output based on GHI measurement at La Jolla, California, where tick marks indicate noon local standard time for each day.} \label{fig6}
\end{figure}

The time-of-use electricity purchase rate $\subscr{C}{g}(t)$ is
\begin{itemize}
\item $16.5$\textcent/$kWh$ from 11AM to 6PM (on-peak);
\item $7.8$\textcent/$kWh$ from 6AM to 11AM and 6PM to 10PM (semi-peak);
\item $6.1$\textcent/$kWh$ for all other hours (off-peak).
\end{itemize} This rate is for the summer season proposed by SDG\&E~\cite{yru_journal:DynamicPricing}, and is plotted in Fig.~\ref{fig12}.

\begin{figure}[tb] \centering
\includegraphics[scale=0.26]{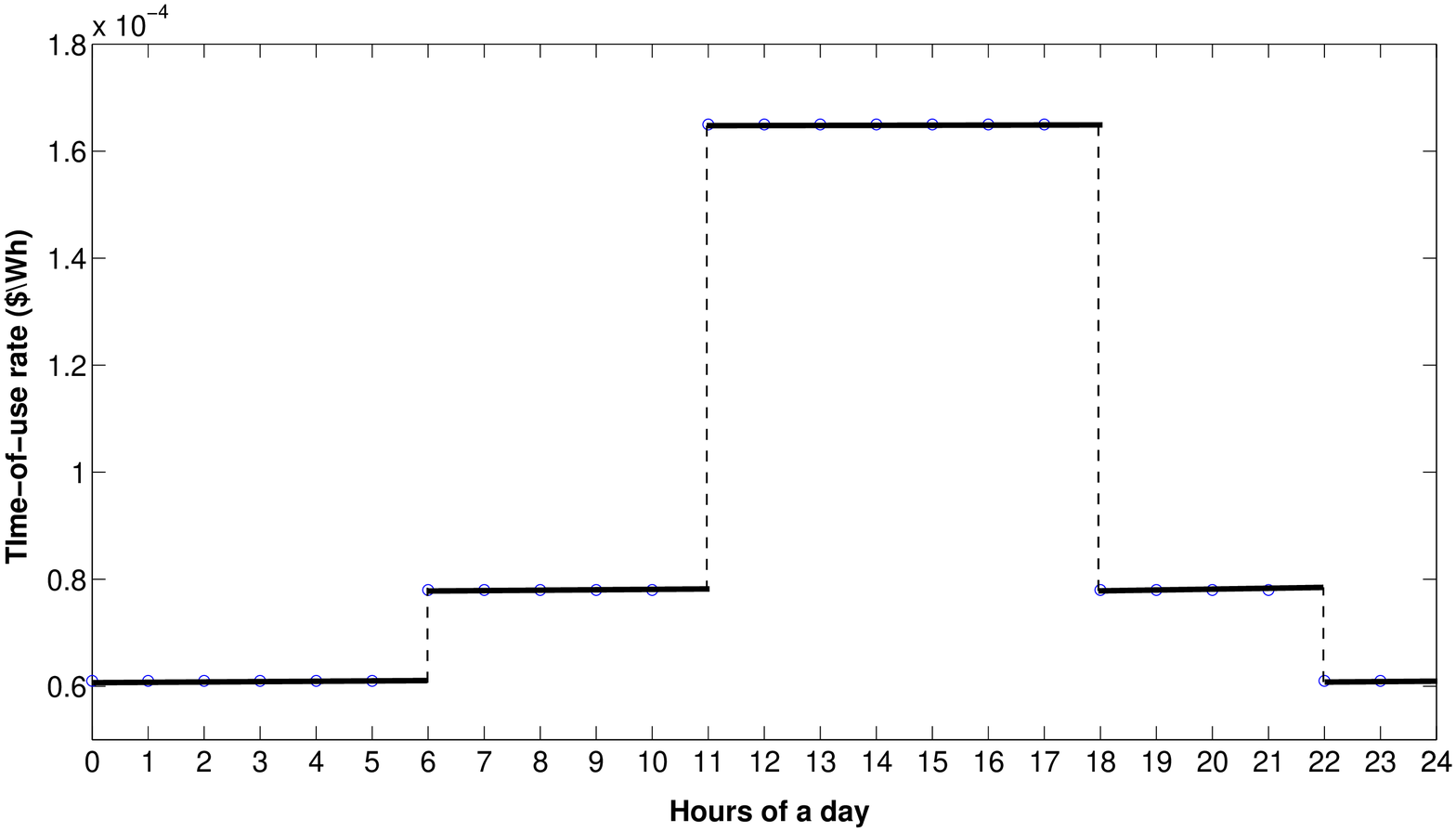}
\caption{Time-of-use pricing for the summer season at San Diego, CA~\cite{yru_journal:DynamicPricing}.} \label{fig12}
\end{figure}

Since the battery dynamics and aging are characterized by continuous ordinary differential equations, we use $\delta t = 1h$ as the sampling interval, and discretize Eqs.~\eqref{eq:battery_dynamic} and~\eqref{eq:battery_aging} as
\begin{align*}
E_B(k+1) &= E_B(k) + P_B(k)~,\\
\Delta C(k+1) &= \begin{cases} \Delta C(k) - Z P_B(k) & \mathrm{if~} P_B(k) < 0\\
\Delta C(k) & \mathrm{otherwise}~.
\end{cases}
\end{align*}
In simulations, we assume that lead-acid batteries are used; therefore, the aging coefficient is $Z = 3 \times 10^{-4}$~\cite{yru_journal:Riffonneau_2011}, the unit cost for capacity loss is $K = 0.15 \$/Wh$ based on the cost of $150\$/kWh$~\cite{yru_journal:Ton_2008}, and the minimum charging time is $T_c = 12 h$~\cite{yru_journal:Charging_leadacid}.

For the PV DC-to-AC converter and the battery DC-to-AC/AC-to-DC converters, we use $\subscr{\eta}{pv} = \subscr{\eta}{B} = 0.9$. It can be verified that Assumption~\ref{assumption_price} holds because the threshold value for $K$ is\footnote{Suppose the battery is a Li-ion battery with the same aging coefficient as a lead-acid battery. Since the unit cost $K = 1.333 \$/Wh$ based on the cost of $1333\$/kWh$~\cite{yru_journal:Ton_2008} (and $K = 0.78 \$/Wh$ based on the $10$-year projected cost of $780\$/kWh$~\cite{yru_journal:Ton_2008}), the use of such a battery is not as competitive as directly purchasing electricity from the grid.}
\begin{align*}
&\frac{(\max_{t} C_g(t) - \min_{t} C_g(t)) \subscr{\eta}{B} }{Z} = \\
&\frac{(16.5 - 6.1) \times 10^{-5} \times 0.9}{3 \times 10^{-4}} \approx 0.3120~.
\end{align*}

For the load, we consider two typical load profiles: the residential load profile as given in Fig.~\ref{fig10}(a), and the commercial load profile as given in Fig.~\ref{fig10}(b). Both profiles resemble the corresponding load profiles in Fig.~8 of~\cite{yru_journal:Rahman_2007}.\footnote{However, simulations in~\cite{yru_journal:Rahman_2007} start at $7$AM so Fig.~\ref{fig10} is a shifted version of the load profile in Fig.~8 in~\cite{yru_journal:Rahman_2007}.} Note that in the residential load profile, one load peak appears in the early morning, and the other in the late evening; in contrast, in the commercial load profile, the two load peaks appear during the daytime and occur close to each other. For multiple day simulations, the load is periodic based on the load profiles in Fig.~\ref{fig10}.

For the parameter $D$, we use $D = 800(W)$. Since the maximum of the loads in Fig.~\ref{fig10} is around $1000(W)$, we will illustrate the peak shaving capability of battery storage. It can be verified that Assumption~\ref{assumption} holds.

\begin{figure}[tb] \centering
\subfigure[Residential load averaged at $536.8(W)$.]{\includegraphics[scale=0.26]{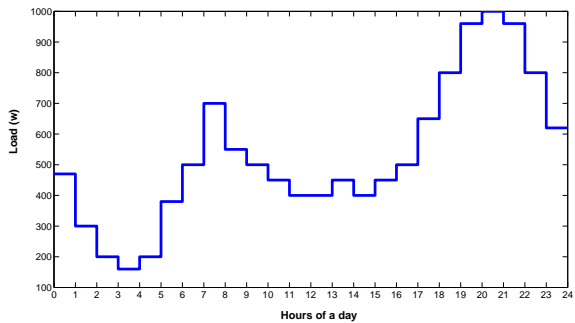}} \hfil
\subfigure[Commercial load averaged at $485.6(W)$.]{\includegraphics[scale=0.26]{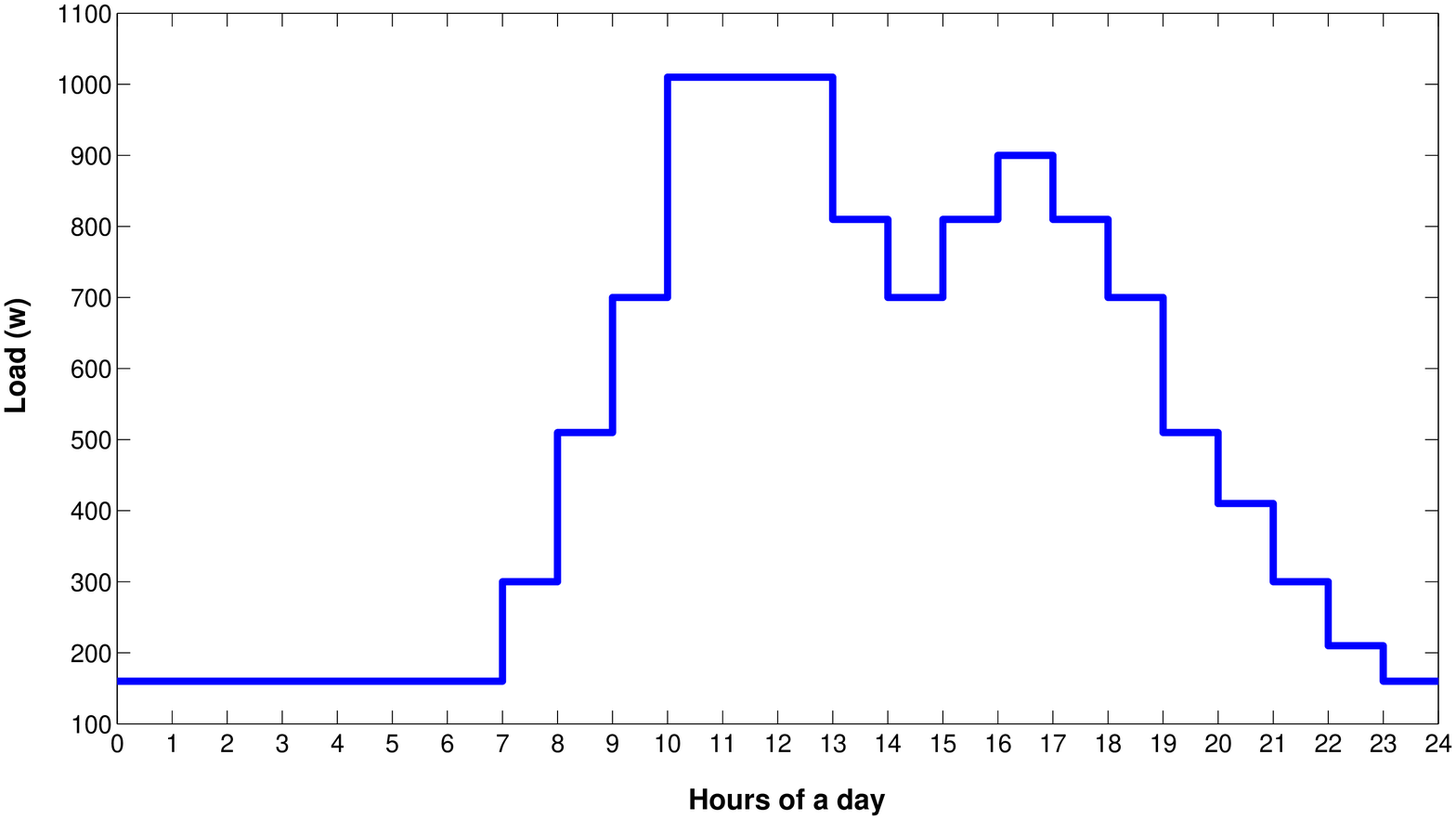}}
\caption{Typical residential and commercial load profiles.} \label{fig10}
\end{figure}

\subsection{Results}
We first examine the storage size determination problem using both Algorithm~\ref{algorithm_simple} and Algorithm~\ref{algorithm_efficient} in the following setting (called the basic setting):
\begin{itemize}
\item the cost minimization duration is $T = 24 (h)$,
\item $t_0$ is $0000$ h LST on Jul 13, 2010, and
\item the load is the residential load as shown in Fig.~\ref{fig10}(a).
\end{itemize}
The lower and upper bounds in Proposition~\ref{prop:upperbound} are calculated as $2667(Wh)$ and $39269(Wh)$. When applying Algorithm~\ref{algorithm_simple}, we choose $\subscr{\tau}{cap} = 10 (Wh)$ and $\subscr{\tau}{cost} = 10^{-4}$, and have $L = 3660$. When running the algorithm, $2327$ optimization problems in Eq.~\eqref{optimization_onestep_simplified} have been solved. The maximum cost $\subscr{J}{max}$ is $-0.1921$ while the minimum cost is $-0.3222$, which is larger than the lower bound $-2.5483$ as calculated based on Proposition~\ref{prop:K}. The critical battery capacity is calculated to be $16089(Wh)$. In contrast, only $$\lceil \log_2 \frac{\subpscr{C}{ref}{ub}}{\subscr{\tau}{cap}} \rceil + 1 = 13$$ optimization problems in Eq.~\eqref{optimization_onestep_simplified} are solved using Algorithm~\ref{algorithm_efficient} while obtaining the same critical battery capacity.

Now we examine the solution to the optimization problem in Eq.~\eqref{optimization_onestep_simplified} in the basic setting with the critical battery capacity $16089(Wh)$. We solve the mixed integer programming problem using the CPlex solver~\cite{yru_journal:CPLEX}, and the objective function is $J(16089) = -0.3222$. $P_B(t)$, $E_B(t)$ and $C(t)$ are plotted in Fig.~\ref{fig13}(a). The plot of $C(t)$ is consistent with the fact that there is capacity loss (i.e., the battery ages) only when the battery is discharged. The capacity loss is around $2.2 (Wh)$, and $$\frac{\Delta C(t_0 + T)}{\subscr{C}{ref}} = \frac{2.2}{16089} = 1.4 \times 10^{-4} \approx 0~,$$ which justifies the assumption we make when linearizing the nonlinear battery aging model in the Appendix. The dynamic pricing signal $C_g(t)$, $P_B(t)$, $\subscr{P}{load}(t)$ and $P_g(t)$ are plotted in Fig.~\ref{fig13}(b). From the second plot in Fig.~\ref{fig13}(b), it can be observed that the battery is charged when the time-of-use pricing is low in the early morning, and is discharged when the time-of-use pricing is high. From the third plot in Fig.~\ref{fig13}(b), it can be verified that, to minimize the cost, electricity is purchased from the grid when the time-of-use pricing is low, and is sold back to the grid when the time-of-use pricing is high; in addition, the peak demand in the late evening (that exceeds $D = 800(W)$) is shaved via battery discharging, as shown in detail in Fig.~\ref{fig13}(c).

\begin{figure}[tb] \centering
\subfigure[~]{\includegraphics[scale=0.28]{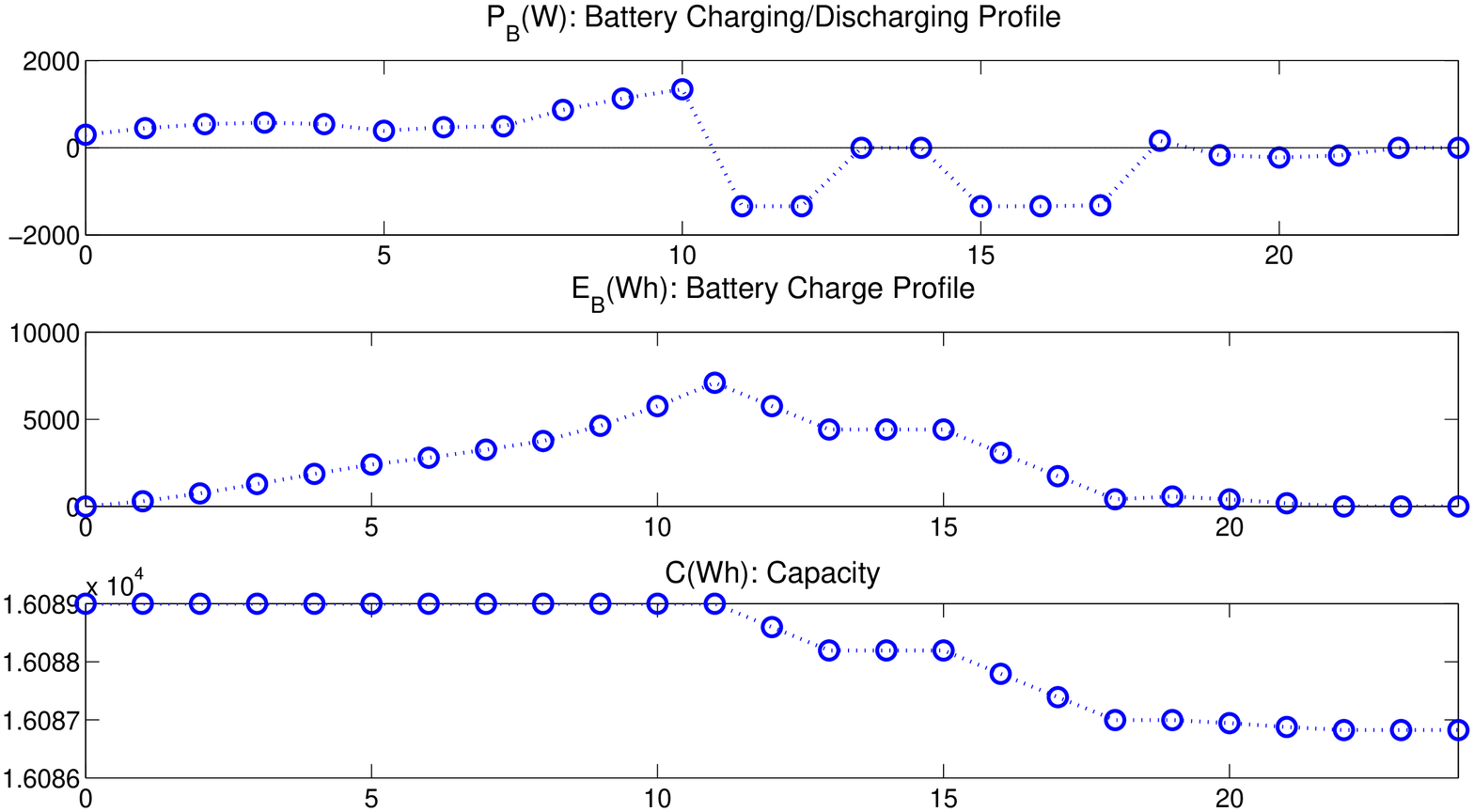}}
\subfigure[~]{\includegraphics[scale=0.28]{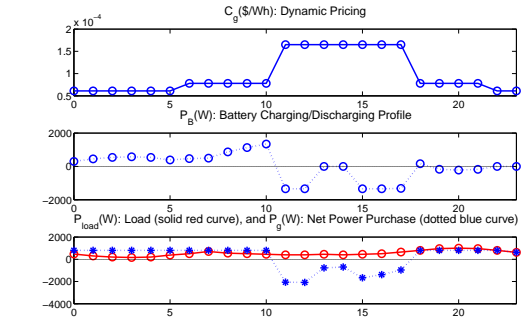}}
\subfigure[~]{\includegraphics[scale=0.28]{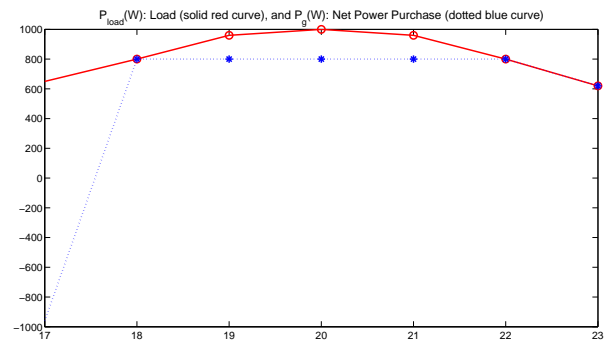}}
\caption{Solution to a typical setting in which $t_0$ is on Jul 13, 2010, $T = 24(h)$, the load is shown in Fig.~\ref{fig10}(a), and $\subscr{C}{ref} = 16089 (Wh)$.} \label{fig13}
\end{figure}

These observations also hold for $T = 48(h)$. In this case, we change $T$ to be $48(h)$ in the basic setting, and solve the battery sizing problem. The critical battery capacity is calculated to be $\subpscr{C}{ref}{c} = 16096(Wh)$. Now we examine the solution to the optimization problem in Eq.~\eqref{optimization_onestep_simplified} with the critical battery capacity $16096(Wh)$, and obtain $J(16096) = -0.6032$. $P_B(t)$, $E_B(t)$, $C(t)$ are plotted in Fig.~\ref{fig14}(a), and the dynamic pricing signal $C_g(t)$, $P_B(t)$, $\subscr{P}{load}(t)$ and $P_g(t)$ are plotted in Fig.~\ref{fig14}(b). Note that the battery is gradually charged in the first half of each day, and then gradually discharged in the second half to be empty at the end of each day, as shown in the second plot of Fig.~\ref{fig14}(a).
\begin{figure}[tb] \centering
\subfigure[~]{\includegraphics[scale=0.28]{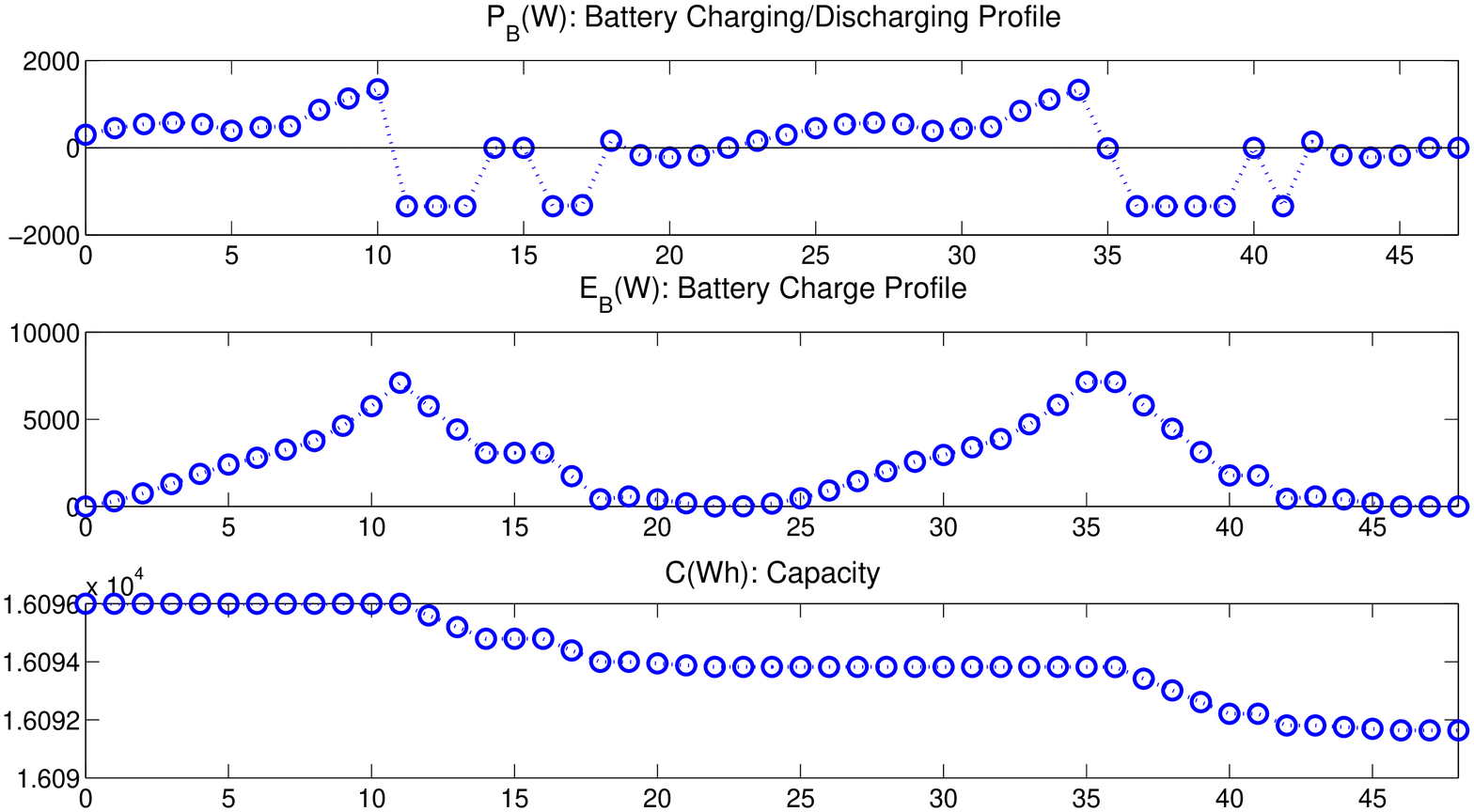}}
\subfigure[~]{\includegraphics[scale=0.28]{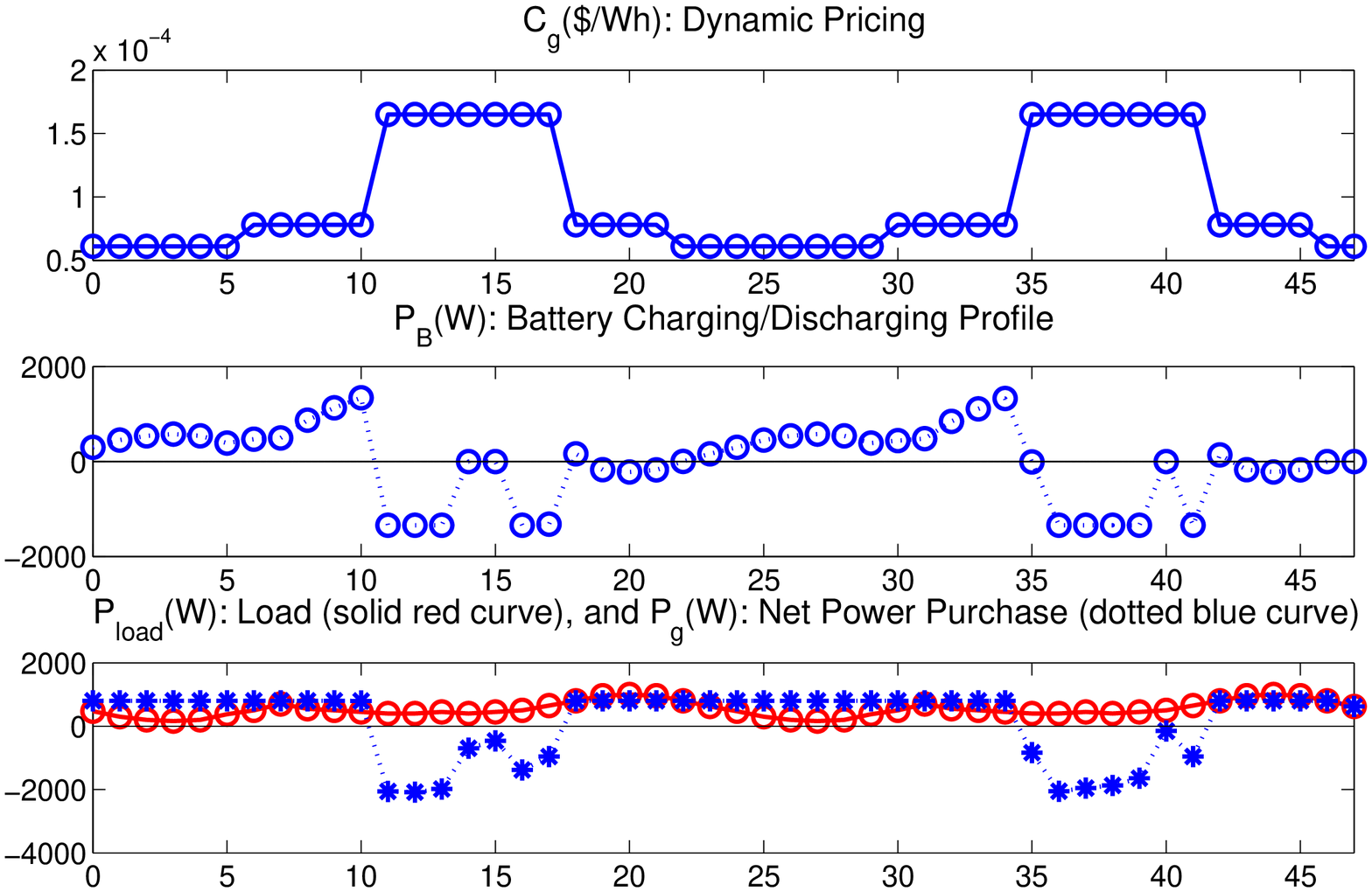}}
\caption{Solution to a typical setting in which $t_0$ is on Jul 13, 2010, $T = 48(h)$, the load is shown in Fig.~\ref{fig10}(a), and $\subscr{C}{ref} = 16096 (Wh)$.} \label{fig14}
\end{figure}

To illustrate the peak shaving capability of the dynamic pricing signal as discussed in Remark~\ref{remark:D}, we change the load to the commercial load as shown in Fig.~\ref{fig10}(b) in the basic setting, and solve the battery sizing problem. The critical battery capacity is calculated to be $\subpscr{C}{ref}{c} = 13352(Wh)$, and $J(13352) = -0.1785$. The dynamic pricing signal $C_g(t)$, $P_B(t)$, $\subscr{P}{load}(t)$ and $P_g(t)$ are plotted in Fig.~\ref{fig15}. For the commercial load, the duration of the peak loads coincides with that of the high price. To minimize the total cost, during peak times the battery is discharged, and the surplus electricity from PV after supplying the peak loads is sold back to the grid resulting in a negative net power purchase from the grid, as shown in the third plot in Fig.~\ref{fig15}. Therefore, unlike the residential case, the high price indirectly forces the shaving of the peak loads.
\begin{figure}[tb] \centering
\includegraphics[scale=0.28]{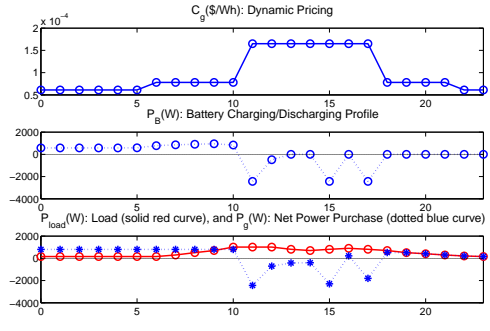}
\caption{Solution to a typical setting in which $t_0$ is on Jul 13, 2010, $T = 24(h)$, the load is shown in Fig.~\ref{fig10}(b), and $\subscr{C}{ref} = 13352 (Wh)$.} \label{fig15}
\end{figure}

Now we consider settings in which the load could be either residential loads or commercial loads, the starting time could be on either Jul 8 or Jul 13, 2010, and the cost optimization duration can be $24(h), 48(h), 96(h)$. The results are shown in Tables~\ref{tab1} and~\ref{tab2}. In Table~\ref{tab1}, $t_0$ is on Jul 8, 2010, while in Table~\ref{tab2}, $t_0$ is on Jul 13, 2010. In the pair $(24, R)$, $24$ refers to the cost optimization duration, and $R$ stands for residential loads; in the pair $(24, C)$, $C$ stands for commercial loads.

We first focus on the effects of load types. From Tables~\ref{tab1} and~\ref{tab2}, the commercial load tends to result in a higher cost (even though the average cost of the commercial load is smaller than that of the residential load) because the peaks of the commercial load coincide with the high price. When there is a relatively large variation of PV generation, commercial loads tend to result in larger optimum battery capacity $\subpscr{C}{ref}{c}$ as shown in Table~\ref{tab1}, presumably because the peak load occurs during the peak pricing period and reductions in PV production have to be balanced by additional battery capacity; when there is a relatively small variation of PV generation, residential loads tend to result in larger optimum battery capacity $\subpscr{C}{ref}{c}$ as shown in Table~\ref{tab2}. If $t_0$ is on Jul 8, 2010, the PV generation is relatively lower than the scenario in which $t_0$ is on Jul 13, 2010, and as a result, the battery capacity is smaller and the cost is higher. This is because it is more profitable to store PV generated electricity than grid purchased electricity. From Tables~\ref{tab1} and~\ref{tab2}, it can be observed that the cost optimization duration has relatively larger impact on the battery capacity for residential loads, and relatively less impact for commercial loads.

In Tables~\ref{tab1} and~\ref{tab2}, the row $\subscr{J}{max}$ corresponds to the cost in the scenario without batteries, the row $J(\subpscr{C}{ref}{c})$ corresponds to the cost in the scenario with batteries of capacity $\subpscr{C}{ref}{c}$, and the row \textit{Savings}\footnote{Note that in Table~\ref{tab2}, part of the costs are negative. Therefore, the word ``Earnings" might be more appropriate than ``Savings".} corresponds to $\subscr{J}{max} - J(\subpscr{C}{ref}{c})$. For Table~\ref{tab1}, we can also calculate the relative percentage of savings using the formula
$\frac{\subscr{J}{max} - J(\subpscr{C}{ref}{c})}{\subscr{J}{max}}$, and get the row \textit{Percentage}. One observation is that the relative savings by using batteries increase as the cost optimization duration increases. For example, when $T = 96(h)$ and the load type is residential, $16.10\%$ cost can be saved when a battery of capacity $18320 (Wh)$ is used;
when $T = 96(h)$ and the load type is commercial, $25.79\%$ cost can be saved when a battery of capacity $15747 (Wh)$ is used. This clearly shows the benefits of utilizing batteries in grid-connected PV systems. In Table~\ref{tab2}, only the absolute savings are shown since negative costs are involved.

\begin{table*}[htbp]
\caption{Simulation Results for $t_0$ on Jul 8, 2011} 
\centering 
\begin{tabular}{c | c | c | c | c | c | c } 
\hline \hline 
& (24, R) & (48, R) & (96, R) & (24, C) & (48, C) & (96, C)\\

\hline $\subpscr{C}{ref}{c}$ & 6891 & 7854 & 18320 & 8747 & 13089 & 15747\\
\hline $J(\subpscr{C}{ref}{c})$ & 0.8390 &  1.5287 & 1.9016 & 0.9802 &  1.7572 & 2.3060\\
\hline $\subscr{J}{max}$ &  0.9212 & 1.7027 & 2.2666 & 1.1314 & 2.1232 & 3.1076\\
\hline \textit{Savings} & 0.0822 & 0.1740 & 0.3650 & 0.1512 & 0.3660 & 0.8016\\
\hline \textit{Percentage} & 8.92\%  &  10.22\%  &  16.10\%  &  13.36\%  &  17.24\% & 25.79\%\\
\hline 
\end{tabular} \label{tab1}

\caption{Simulation Results for $t_0$ on Jul 13, 2011} 
\centering 
\begin{tabular}{c | c | c | c | c | c | c } 
\hline \hline 
& (24, R) & (48, R) & (96, R) & (24, C) & (48, C) & (96, C)\\

\hline $\subpscr{C}{ref}{c}$ & 16089 & 16096 & 70209 & 13352 & 17363 & 32910\\
\hline $J(\subpscr{C}{ref}{c})$ & -0.3222 &  -0.6032 & -0.9930 & -0.1785 &  -0.3721 & -0.5866\\
\hline $\subscr{J}{max}$ &  -0.1921 & -0.3395 & -0.4649 & 0.0181 & 0.0810 & 0.3761\\
\hline \textit{Savings} & 0.1301  &  0.2637  &  0.5281  &  0.1966  &  0.4531  &  0.9627\\
\hline 
\end{tabular}
\label{tab2}
\end{table*}

\section{Conclusions} \label{section6}
In this paper, we studied the problem of determining the size of battery storage for grid-connected PV systems. We proposed lower and upper bounds on the storage size, and introduced an efficient algorithm for calculating the storage size. In our analysis, the conversion efficiency of the PV DC-to-AC converter and the battery DC-to-AC and AC-to-DC converters is assumed to be a constant. We acknowledge that this is not the case in the current practice, in which the efficiency of converters depends on the input power in a nonlinear fashion~\cite{yru_journal:Riffonneau_2011}. This will be part of our future work. In addition, we would like to extend the results to distributed renewable energy storage systems, and generalize our setting by taking into account stochastic PV generation.
%

\appendix

\noindent \textbf{Derivation of the Simplified Battery Aging Model}

Eqs.~(11) and (12) in~\cite{yru_journal:Riffonneau_2011} are used to model the battery capacity loss, and are combined and rewritten below using the notation in this work:
\begin{equation}
C(t + \delta t) - C(t) = - \subscr{C}{ref} \times Z \times (\frac{E_B(t)}{C(t)} - \frac{E_B(t + \delta t)}{C(t + \delta t)})~.\label{eq:1}
\end{equation}
If $\delta t$ is very small, then $C(t + \delta t) \approx C(t)$. Therefore, $\frac{E_B(t)}{C(t)} - \frac{E_B(t + \delta t)}{C(t + \delta t)} \approx \frac{- P_B(t) \delta t}{C(t)}$. If we plug in the approximation, divide $\delta t$ on both sides of Eq.~\eqref{eq:1}, and let $\delta t$ goes to $0$, then we have
\begin{equation}
\frac{d C(t)}{dt} = \subscr{C}{ref} \times Z \times \frac{P_B(t)}{C(t)}~.\label{eq:2}
\end{equation}
This holds only if $P_B(t) < 0$ as in~\cite{yru_journal:Riffonneau_2011}, i.e., there could be capacity loss only when discharging the battery.

Since Eq.~\eqref{eq:2} is a nonlinear equation, it is difficult to solve. Let $\Delta C(t) = \subscr{C}{ref} - C(t)$, then Eq.~\eqref{eq:2} can be rewritten as
\begin{equation}
\frac{d \Delta C(t)}{dt} = - Z \times \frac{P_B(t)}{1- \frac{\Delta C(t)}{\subscr{C}{ref}}}~.\label{eq:3}
\end{equation}
If $t$ is much shorter than the life time of the battery, then the percentage of the battery capacity loss $\frac{\Delta C(t)}{\subscr{C}{ref}}$ is very close to $0$. Therefore, Eq.~\eqref{eq:3} can be simplified to the following linear ODE
\begin{equation*}
\frac{d \Delta C(t)}{dt} = - Z \times P_B(t)~,
\end{equation*}
when $P_B(t) < 0$. If $P_B(t) \geq 0$, there is no capacity loss, i.e., $$\frac{d C(t)}{dt} = 0~.$$



%
\IEEEtriggeratref{18}
\bibliographystyle{IEEEtran}
\bibliography{IEEEabrv,yru_journal}

\end{document}